\newcommand{\labbel}{\label}
\newtheorem{theorem}{Theorem}[section]
\newtheorem{proposition}[theorem]{Proposition} 
\newtheorem{cor}[theorem]{Corollary} 
\newtheorem{corollary}[theorem]{Corollary} 
\newtheorem{fact}[theorem]{Fact}
\theoremstyle{definition}
\newtheorem{definition}[theorem]{Definition}
\newtheorem{definitions}[theorem]{Definitions}
\theoremstyle{remark}
\newtheorem{remark}[theorem]{Remark}
\newtheorem{example}[theorem]{Example}
\newcommand{\brfrt}{\hspace{0 pt}}
\DeclareMathOperator{\CAP}{CAP}
\begin{document}
 
\title{A very general covering property}

\author{Paolo Lipparini} 
\address{Dipartimento di Matematica\\Viale del Ricoprimento Scientifico\\II Universit\`a di Roma (Tor Vergata)\\I-00133 ROME ITALY}
\urladdr{http://www.mat.uniroma2.it/\textasciitilde lipparin}

\thanks{We thank anonymous referees for many helpful
suggestions which greatly helped improving the paper. We thank our students from Tor Vergata University for stimulating questions. We thank Enna Andrici for encouragement.}

\keywords{Covering property, subcover, compactness, accumulation point, convergence, pseudocompactness, limit point} 

\subjclass[2010]{Primary 54D20; Secondary 54A20}

\begin{abstract}
We introduce a general notion of covering property,
of which many classical definitions are particular instances.
Notions of closure under various sorts of convergence, or, more generally,
under taking kinds of accumulation points, are shown to be equivalent to
a covering property in the sense considered here (Theorem \ref{r}). Conversely,
every covering property is equivalent to the existence 
of  appropriate kinds of
accumulation points for arbitrary sequences on some fixed index set (Corollary \ref{covisacc}).

We  discuss corresponding notions related to sequential compactness, and to pseudocompactness,
or, more generally, properties connected with the existence of limit points
of sequences of  subsets. In spite of the great generality of our
treatment, many results here appear to be new even in very special cases,
such as $D$-\brfrt compactness and $D$-\brfrt pseudocompactness, for $D$ an ultrafilter, and 
weak (quasi) $M$-(pseudo)-compactness, for $M$ a set of ultrafilters, as well as 
for $[ \beta , \alpha   ]$-\brfrt compactness, with $\beta$ and $\alpha$ ordinals.
\end{abstract} 
 
\maketitle

\section{Introduction} \labbel{intro}
 
``Covering property'' in the title refers to a property of the form
``every open cover has a subcover by a 
tractable
 class of elements''.
The most general and easiest form of establishing what ``tractable'' is to be intended  simply
amounts to enumerate those sets which are to be considered \emph{tractable}.
We are thus led to the following definition, where
$\mathcal P(A)$ denotes the set of all subsets of the set $A$.

\begin{definition} \labbel{BAcpn} 
\cite[Definition 7.7]{ordcpn} If $A$ is a set, and $B \subseteq \mathcal P(A)$,
  we say that a topological space
$X$ is \emph{$[ B, A ]$-\brfrt compact} if and only if,
whenever $(O_ a) _{ a \in A} $ is a sequence of 
open sets of $X$ such that 
$ \bigcup  _{ a \in A } O_ a = X $,
then there is $H \in B $  such that
$ \bigcup  _{ a \in H } O_ a= X $.
\end{definition}   

Of course, (full) \emph{compactness} is the particular case when 
 $A$ is infinite and arbitrarily large, and $B$ is the set of all finite subsets of $A$.
If in the above sentence we replace finite by countable, we get \emph{Lindel\"ofness}. On the other hand, if we instead restrict only
to countable $A$, we get \emph{countable compactness}. More general examples
of cardinal (and ordinal) notions reducible to $[ B, A ]$-\brfrt compactness shall be presented below.

We first show how to produce 
 counterexamples to 
$[B , A    ]$-\brfrt compactness
in a standard way.

\begin{example} \labbel{b} 
Suppose that $A$ is a set, $ B \subseteq \mathcal P(A)$, 
$B$ is nonempty,
and $A \not \in B$
(the assumption $A \not \in B$ is necessary by Fact \ref{fac}(1) below).

(a) As a typical counterexample to  
$[ B , A  ]$-\brfrt compactness,
we can exhibit  $B$ itself, with the  topology
a subbase of which
consists of the sets
$a^{\nless}=\{ H \in B \mid a \not\in H \}$,
$a$ varying in $A$.

With the above topology, 
$B$ is not 
$[ B , A  ]$-\brfrt compact,
as the $a^{\nless}$s themselves
witness.
Indeed, the  $a^{\nless}$s are a cover of $B$, since 
$A \not \in B$.
However, for every  $H \in B$, 
$ (a^{\nless}) _{a \in H} $ is not a cover
of $B$, since $H$ belongs to 
no $a^{\nless}$, for $a \in H$.   

We believe that, in a sense still to be made precise,
$B$ with the above topology
 is the typical example of a non  $[ B , A  ]$-\brfrt compact topological space.
This is suggested by particular cases concerning ordinal compactness,
see \cite[Theorems 5.4 and 5.7]{ordcpn}.

 Notice that $B$, with the above topology, is $T_0$, but, in general, not even $T_1$. 
However, the example can be turned into a Tychonoff  topological space 
by introducing a finer topology
 as in (b) below.

Observe that $\mathcal P(A)$ is in a bijective correspondence,
via characteristic functions, with 
$ { {^A} } \{ 0, 1\}  $,
the set of all functions from
$A$ to 
$ \{ 0, 1\}  $,
hence with the product of 
$A$-many copies of 
$  \{ 0, 1\}  $. 
Via the above identifications,
if we give to 
$  \{ 0, 1\}  $
the topology in which 
$  \{ 0\}  $
is open, 
but $  \{  1\}  $
is not open, then the topology described above
is the subspace topology induced
on $B$ by the (Tychonoff)  
product topology on 
$ { {^A} } \{ 0, 1\}  $.

(b) 
If we instead give to 
$  \{ 0, 1\}  $
the discrete topology, 
then the subset topology induced on $B$ 
by the topology on 
$ { {^A} } \{ 0, 1\}  $ makes $B$ a Tychonoff topological space,
which is still a counterexample to $[ B , A  ]$-\brfrt compactness.
This latter topology, too,  admits an explicit description: it is the topology
a subbase of which consists of the sets 
which have either the form
$a^{\nless}=\{ H \in B \mid a \not\in H \}$,
or
$a ^{<} =\{ H \in B \mid a \in H \}$,
for some
$a \in A$.

If $B$ is closed under symmetrical difference, then, with this topology,
 $B$ inherits from $ { {^A} } \{ 0, 1\}  $
 the structure of a topological group.
If $B$ is closed both under finite unions and finite intersections, then
  $B$ inherits from $ { {^A} } \{ 0, 1\}  $
 the structure of a topological  lattice. 
\end{example}  

We now consider some more specific instances of Definition \ref{BAcpn}. 

The most general form of a covering notion involving cardinality
as a measure of ``tractability'' is 
\emph{$[ \mu, \lambda  ]$-\brfrt compactness},
where $\mu$ and $\lambda$ are cardinals.
It is the particular case of  Definition \ref{BAcpn} 
 when $A= \lambda $ and 
$B = \mathcal P _{\mu}( \lambda ) $
is the set of all subsets of $ \lambda $ of cardinality $<\mu$. 
The notion of $[ \mu, \lambda  ]$-\brfrt compactness originated in the 
20's in the past century \cite{AU}, and thus has a very long history. See, e.~g., 
\cite{Ga,Go,Sm,St,Vlnm,Vfund,Vhstt,Vecgt} for results and references.

In \cite{ordcpn} we generalized cardinal compactness to ordinals, that is, we 
considered the particular case of Definition \ref{BAcpn} 
in which $A$ is an ordinal (or, anyway, a well-ordered set),
and the ``tractability'' of some subset $H$ of $A$ 
is measured by considering  the  order type of $ H$.
In more detail, for $\alpha$ and $\beta$ ordinals,
\emph{$[ \beta , \alpha   ]$-\brfrt compactness}
is obtained from Definition \ref{BAcpn}  by letting
$A= \alpha $, and letting $B$ equal to the set of all subsets of $ \alpha $ 
having order type $<\beta$. 
The notion is interesting, since one can prove many 
non trivial results of the form  ``every $[ \beta , \alpha   ]$-\brfrt compact
space is $[ \beta' , \alpha'   ]$-\brfrt compact'', for various ordinals, while only trivial 
results of this kind hold, when restricted to cardinals.  
Moreover, there are examples of spaces satisfying exactly the same 
$[ \mu, \lambda  ]$-\brfrt compactness (cardinal) properties, but which 
behave in a very different way as far as (ordinal) $[ \beta , \alpha  ]$-\brfrt compactness is concerned. Just to present the simplest possible example,
if $\kappa$ is a regular uncountable cardinal, then $\kappa$, with the order topology,
is $[ \kappa + \kappa , \kappa + \kappa  ]$-\brfrt compact, but the disjoint 
union of two copies of $\kappa$ is not $[ \kappa + \kappa , \kappa + \kappa  ]$-\brfrt compact (here $+$ denotes ordinal sum). 
Furthermore, there are 
many rather deep connections among $[ \beta , \alpha  ]$-\brfrt compactness, cardinalities and separation properties of spaces.
In  \cite{ordcpn} we also introduced an ordinal version of the
Lindel\"of number of a topological space, and showed that this ordinal version
gives much more informations about the space than the cardinal version.

So far, we have not yet provided really strong motivations in favor
of the great generality of Definition \ref{BAcpn}. Indeed, at first sight,
the ordinal version of compactness,  that is, $[ \beta , \alpha   ]$-\brfrt compactness, 
appears to be  a quite very sensitive and fine notion, 
well suited for exactly measuring the covering properties 
enjoyed by some topological space. However,  other interesting 
properties naturally insert themselves into the general framework given by
Definition \ref{BAcpn}. In fact, besides considering 
$[ \beta , \alpha   ]$-\brfrt compactness, we reached the notion 
of $[ B, A   ]$-\brfrt compactness after a careful look at the proposition below, which characterizes $D$-compactness.

Recall that if $D$ is an ultrafilter, say over some set $I$, then a topological space $X$ is said to be
$D$-\emph{compact} if and only if every sequence 
$(x_i)_{i \in I}$ 
of elements of $X$
$D$-converges to some point of $x $, where
a sequence  $(x_i)_{i \in I}$ is said to 
$D$-\emph{converge}  to some point $x \in X$ if and only if
$ \{ i \in I \mid  x_i \in U\} \in D$, for every neighborhood
$U$ of $x$ in $X$.

In \cite [Corollary 34]{tapp2} 
we proved the following proposition, which is also
a particular case of Theorem \ref{r} below (see Remark \ref{d}).

\begin{proposition} \labbel{DAB}
Let $D$ be an ultrafilter over $I$.
A topological space 
$X$ is $D$-\brfrt compact if and only if, 
for every open cover $(O_Z) _{Z \in D} $ of $X$,
there is some $i \in I$ such that  
$(O_Z) _{i \in Z } $
is a cover of $X$.
 \end{proposition}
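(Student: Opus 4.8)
The plan is to prove both implications by contraposition, exploiting the tight correspondence between the index $i \in I$ occurring in the covering statement and the coordinates of a candidate sequence $(x_i)_{i \in I}$.

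For the direction from $D$-\brfrt compactness to the covering property, I would argue contrapositively. Suppose $(O_Z)_{Z \in D}$ is an open cover of $X$ for which no $i \in I$ works, i.e.\ for each $i$ the subfamily $(O_Z)_{i \in Z}$ fails to cover $X$. Choose, for each $i \in I$, a point $x_i \in X \setminus \bigcup_{Z \in D,\, i \in Z} O_Z$. By $D$-\brfrt compactness the sequence $(x_i)_{i \in I}$ $D$-converges to some $x \in X$; since the $O_Z$ cover $X$, pick $Z_0 \in D$ with $x \in O_{Z_0}$. Then $O_{Z_0}$ is an open neighborhood of $x$, so $D$-convergence gives $\{i \in I : x_i \in O_{Z_0}\} \in D$. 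But by the choice of the $x_i$, whenever $i \in Z_0$ we have $x_i \notin O_{Z_0}$, so the set $\{i : x_i \in O_{Z_0}\}$ is disjoint from $Z_0$; being contained in $I \setminus Z_0$, it cannot lie in $D$, since $Z_0 \in D$ and $D$ is upward closed. This contradiction proves the implication, and I would remark that it uses only that $D$ is a proper filter.

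For the converse the ultrafilter hypothesis becomes essential, and I would again argue contrapositively. Assume $(x_i)_{i \in I}$ $D$-converges to no point of $X$. For each $x \in X$ there is then an open neighborhood $U_x$ with $\{i : x_i \in U_x\} \notin D$, whence, $D$ being an ultrafilter, $Z_x := \{i : x_i \notin U_x\} = I \setminus \{i : x_i \in U_x\} \in D$. The key construction is to define, for each $Z \in D$,
\[
O_Z := \bigcup \{\, U_x : x \in X,\ Z \subseteq Z_x \,\},
\]
which is open. These sets cover $X$, since each $y \in X$ satisfies $y \in U_y \subseteq O_{Z_y}$ (because $Z_y \subseteq Z_y$), and they are indexed by $D$. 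Applying the covering property, I obtain $i \in I$ such that $(O_Z)_{i \in Z}$ covers $X$; in particular $x_i \in O_Z$ for some $Z \in D$ with $i \in Z$. Unwinding the definition of $O_Z$, there is $x \in X$ with $Z \subseteq Z_x$ and $x_i \in U_x$. But $x_i \in U_x$ means $i \notin Z_x$, while $i \in Z \subseteq Z_x$ gives $i \in Z_x$, a contradiction.

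The routine parts — openness of each $O_Z$ and the verification that the chosen families are genuine covers — are immediate; the only real design decision, and the step I expect to be the crux, is the definition of the cover $(O_Z)_{Z \in D}$ in the converse direction. It must be arranged so that the single index $i$ handed back by the covering property, tested against its own coordinate $x_i$, forces $i \in Z_x$ and $i \notin Z_x$ simultaneously; tying the neighborhood $U_x$ to the member $Z_x \in D$ through the condition $Z \subseteq Z_x$ is exactly what makes this collapse happen. A secondary point to keep straight is precisely where ultrafilter-ness (rather than mere filter-ness) is invoked, namely in the passage from $\{i : x_i \in U_x\} \notin D$ to $Z_x \in D$.
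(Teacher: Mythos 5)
Your proof is correct, and it is genuinely more self-contained than the paper's. The paper does not prove Proposition \ref{DAB} directly: it derives it (Remark \ref{d}) as the instance $A=D$ of the general Theorem \ref{r}, using that $D^+=D$ for an ultrafilter; the engine of that proof is the closed-set pivot $C_Z=\overline{\{x_i \mid i\in Z\}}$ together with Proposition \ref{ae}, which identifies the $D$-limit points of $(x_i)_{i\in I}$ with the points of $\bigcap_{Z\in D}C_Z$. Your forward direction (choosing a witness $x_i$ outside $\bigcup_{i\in Z}O_Z$ for each $i$) is essentially the same argument the paper's machinery performs, and your observation that it needs only the filter properties of $D$ matches the paper's framework, where that direction works for any $A\subseteq\mathcal P(I)$. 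Your converse differs more substantially: where the paper would take the canonical cover $O_Z=X\setminus\overline{\{x_i\mid i\in Z\}}$ (for which ``no $i$ works'' is immediate because $x_i\in C_Z$ whenever $i\in Z$), you build the more elaborate cover $O_Z=\bigcup\{U_x : Z\subseteq Z_x\}$ from the witnessing neighborhoods $U_x$. Both are valid; the paper's choice is slightly leaner and makes the role of the closure operation visible, while yours keeps everything at the level of neighborhoods and makes explicit exactly where ultrafilterness is used (passing from $\{i: x_i\in U_x\}\notin D$ to $Z_x\in D$) --- a point the paper buries in the identity $D^+=D$. What you lose relative to the paper is generality: the abstract route gives, with no extra work, the analogous statements for arbitrary $A\subseteq\mathcal P(I)$ and $E=A^+$ (e.g.\ Corollary \ref{wc}), which your ad hoc cover construction does not immediately yield.
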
 

Thus also $D$-compactness is equivalent
to a covering property,
namely, the particular case of Definition \ref{BAcpn}
in which $A$ is $D$ itself, 
and $B= \{ i^< \mid i \in I\} $,
where, for $i \in I$, we put  $i^<=\{ Z \in D \mid i \in Z\}$.
In words, $B$ is the set of all the intersections of $D$
with some principal ultrafilter.
Hence, in the sense of $D$-compactness,
 being ``tractable'' means (having indices) lying
in the intersection of $D$ with some principal ultrafilter.

Reflecting on the above example, we soon realized that
many other conditions asking closure under appropriate
types of convergence are equivalent to covering properties.
Furthermore, this is the case also for 
 the existence of kinds of accumulation points,
 as we shall show in Section \ref{convcov}. Historically, the interplay
between covering properties and accumulation properties
has been a central theme in topology, starting from \cite{AU},
if not earlier. In this respect, see also the discussion in Remark \ref{classical}.

Also a generalization of $D$-compactness, weak $M$-compactness,
involving a set $M$ of ultrafilters, is equivalent to a covering property,
as will be shown in Corollary \ref{wc}. 
See Corollary \ref{quasicp} for a characterization of a further related notion: quasi $M$-compactness. 

If in Definition \ref{BAcpn} we take $B= \mathcal P(A) \setminus \{ A \} $, then
a counterexample to 
$[B , A    ]$-\brfrt compactness is what is usually called
an \emph{irreducible} (or \emph{minimal}) cover. 
Irreducible covers, as well as spaces
 in which every cover can be refined to a
(possibly finite)  irreducible cover
have been the object of some study.
See \cite{AD,L} and further references there.
In a sense, an infinite irreducible cover produces a maximal form
of incompactness. Indeed, e.~g.~ by Fact \ref{fac}(2)(6) below in contrapositive form,
if some topological space $X$ has an irreducible cover of cardinality $\lambda$,
then     $X$ is not $[B , A    ]$-\brfrt compact, for every set $A$ 
such that $| A| \leq \lambda $, 
and every $B \subseteq \mathcal P (A)$ such that 
  $A \not \in B$.

If $X$ is a $T_1$  topological space which is not
countably compact, then any open cover witnessing countable incompactness
can be refined to an irreducible  countably infinite open cover. This follows,
for example, from the proof  of \cite[Lemma 6.4]{ordcpn}. Compare also with 
\cite[Theorem 2.1]{AD}.  Thus we get the following
Proposition.

\begin{proposition} \labbel{t1}
 For a 
$T_1$   topological space $X$, the following conditions are equivalent 
  \begin{enumerate}
    \item 
$X$  is not
countably compact.
\item
  $X$ is not
 $[ \mathcal P(A) \setminus \{ A \} , A    ]$-\brfrt compact,
for every  countable nonempty set $A$.
\item
$X$ is not
 $[B, A    ]$-\brfrt compact,
for some countably infinite set $A$,
and some $B \subseteq \mathcal P(A)$ such that 
 $B $ contains all finite subsets of $A$.  
  \end{enumerate} 
 \end{proposition}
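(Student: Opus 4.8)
The plan is to establish the cycle of implications $(1) \Rightarrow (2) \Rightarrow (3) \Rightarrow (1)$, of which two arrows are essentially bookkeeping and one carries all the topological content. Recall first that, unravelling Definition \ref{BAcpn}, failure of $[ \mathcal P(A) \setminus \{ A \}, A ]$-compactness means exactly that $X$ admits an \emph{irreducible} open cover indexed by $A$, that is, an open cover $(O_a)_{a \in A}$ with $\bigcup_{a\in A} O_a = X$ but $\bigcup_{a \in H} O_a \neq X$ for every proper $H \subsetneq A$.

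For $(3) \Rightarrow (1)$ I would argue directly: if $X$ fails to be $[B,A]$-compact for some countably infinite $A$ and some $B$ containing all finite subsets of $A$, then the witnessing open cover $(O_a)_{a\in A}$ has no subcover indexed by a member of $B$, hence in particular no finite subcover; being a countable open cover without a finite subcover, it witnesses that $X$ is not countably compact. For $(2) \Rightarrow (3)$ I would simply specialize $(2)$ to the index set $A = \omega$ and take $B = \mathcal P(\omega) \setminus \{ \omega\}$; this $B$ contains every finite subset of $\omega$ (each being $\neq \omega$), so the failure of $[\mathcal P(\omega)\setminus\{\omega\}, \omega]$-compactness furnished by $(2)$ is already the failure of $[B,\omega]$-compactness required by $(3)$.

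The substance is $(1) \Rightarrow (2)$. Here I would invoke the quoted consequence of \cite[Lemma 6.4]{ordcpn}: since $X$ is $T_1$ and not countably compact, $X$ has an irreducible \emph{countably infinite} open cover $(U_m)_{m \in \omega}$. Irreducibility gives, for each $m$, a private point $x_m \in U_m \setminus \bigcup_{m' \neq m} U_{m'}$. Now, given an arbitrary countable nonempty $A$, since $1 \leq |A| \leq \aleph_0 = |\omega|$ I can fix a partition $(P_a)_{a \in A}$ of $\omega$ into nonempty blocks indexed by $A$, and set $V_a = \bigcup_{m \in P_a} U_m$. Then $(V_a)_{a \in A}$ is an open cover of $X$ (its union is $\bigcup_{m \in \omega} U_m = X$), and I claim it is irreducible, which exhibits the required failure of $[\mathcal P(A)\setminus\{A\}, A]$-compactness.

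To verify irreducibility, fix any proper $A' \subsetneq A$, pick $a_0 \in A \setminus A'$ and some $m_0 \in P_{a_0}$; the disjointness of the blocks forces $m_0 \notin P_a$ for every $a \in A'$, so every $U_m$ entering some $V_a$ with $a \in A'$ has $m \neq m_0$, whence the private point $x_{m_0}$ lies in no such $V_a$ and $(V_a)_{a \in A'}$ fails to cover $X$. The main obstacle is really concentrated in the cited lemma — the passage from a plain countable cover with no finite subcover to a genuinely irreducible one — and this is exactly where $T_1$ is indispensable; once the irreducible $\omega$-indexed cover and its private points are in hand, the partition construction transfers irreducibility to every countable index set $A$, finite or infinite, in a uniform and purely combinatorial way, and the degenerate case $X = \emptyset$ is excluded since a non-countably-compact space is nonempty.
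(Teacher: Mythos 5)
Your proof is correct and follows essentially the same route as the paper, which obtains the proposition from the same cited refinement lemma (a $T_1$ space that is not countably compact has an irreducible countably infinite open cover) and leaves the remaining bookkeeping implicit. Your explicit partition argument for $(1)\Rightarrow(2)$ is just a direct rendering of Fact \ref{fac}(6) in contrapositive form, which is exactly the tool the paper's introduction points to for this step.
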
 

The above equivalences do not generalize to uncountable cardinals.
The space $\kappa$, with the order topology, is not
$[ \kappa, \kappa ]$-compact, but it is   
$[ \kappa+ \omega , \kappa+ \omega  ]$-compact \cite[Example 3.2(3)]{tapp2}
(here $+$ denotes ordinal sum).
Moreover the hypothesis that $X$ is $T_1$ is necessary,
by \cite[Example 3.2(2)]{tapp2}.

Though simple, Definition \ref{BAcpn} unifies 
many disparate situations, and allows for the possibility of proving
some interesting and non trivial results, which sometimes are new and 
useful even in very particular cases.

\smallskip

When restricted to (cardinal) $[ \mu , \lambda    ]$-\brfrt compactness,
some of the results presented in this note might be seen as a revisitation of known results.
They are new in the case of
(ordinal)
$[ \beta , \alpha   ]$-\brfrt compactness.
Actually, the study of properties 
of $[ \beta , \alpha   ]$-\brfrt compactness
has been the leading motivation for the present research. 
Restricted to this particular case, this note may be seen 
as a continuation of \cite{ordcpn}.
As soon as we realized that the
results naturally fit into a more general setting, 
with no essential further technical complication,
we decided to present them in  their more general form.

As far as $D$-compactness, and other notions of convergence are  concerned, the results presented here
can improve shedding new light into the subject.
In particular, they hopefully provide a new point of view about the relationship between
convergence, accumulation  and covering properties.

It might be of some interest the fact that there is also  a version  for notions related to 
pseudocompactness. As well known, for Tychonoff spaces, there is an equivalent formulation of pseudocompactness which
involves open covers: a Tychonoff space is \emph{pseudocompact} if and only if every countable
open cover has a subset with dense union.  Here the premise is the same as in
countable compactness, with a weakened conclusion.
Definition \ref{BAcpn}, too, can be modified in the same way (Definition \ref{ABO}), and
essentially all the results we prove for $[B , A    ]$-\brfrt compactness
have a version for this pseudocompact-like notion. The notion
of convergence (or accumulation) of a sequence of \emph{points}
 shall be replaced
with  notions of limit points of a sequence of \emph{subsets}. 

Furthermore, in Section \ref{sequential}, we present  variations which include 
covering properties equivalent to sequential compactness, sequential pseudocompactness, 
quasi  $M$-compactness,  the Menger property and the Rothberger property.
Many other notions can be obtained as particular cases of Definition \ref{ababg}. 
 Definition \ref{ababg} probably deserves further study.

\smallskip

We assume no separation axiom, unless otherwise specified.

\section{Equivalents of a covering property} \labbel{top} 

In this section we show that, for every $B$ and $A$ as in Definition \ref{BAcpn},
there are many equivalent formulations of $[ B, A ]$-\brfrt compactness.
In particular, it can be characterized by a sort of accumulation property,
in a sense which shall be explicitly described in the next section.
Parts of the results presented in this section are known for $[ \mu , \lambda  ]$-\brfrt compactness, hence, in particular, for countable compactness, Lindel\"ofness etc. They are new 
for (ordinal) $[ \beta , \alpha  ]$-\brfrt compactness, 
and for other general notions of compactness.

We begin with a trivial but useful fact.

\begin{fact} \labbel{factclosed}
A topological space  is $[ B, A ]$-\brfrt compact if and only if, for every sequence $(C_ a) _{ a \in A} $  of 
closed sets, if
$ \bigcap  _{ a \in H } C_ a \not= \emptyset  $,
for every  $H \in B $, then
$ \bigcap  _{ a \in A } C_ a \not= \emptyset  $.
 \end{fact}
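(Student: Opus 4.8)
The plan is to exploit the standard duality between open covers and families of closed sets with the $B$-indexed intersection property, obtained simply by passing to complements. The first step is to note that the assignment $(O_a)_{a\in A}\mapsto(C_a)_{a\in A}$, with $C_a=X\setminus O_a$, is a bijection between sequences of open sets indexed by $A$ and sequences of closed sets indexed by $A$. Because this correspondence is a bijection, it suffices to match the two conditions termwise under it, and both implications of the \emph{if and only if} will follow at once, with no information lost in either direction.

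The computational core is a single application of De Morgan's law: for every $H\subseteq A$ one has $X\setminus\bigcup_{a\in H}O_a=\bigcap_{a\in H}C_a$, so that $\bigcup_{a\in H}O_a=X$ holds precisely when $\bigcap_{a\in H}C_a=\emptyset$. I would record this equivalence in the two cases that matter, namely with $H$ ranging over $B$ and with $H=A$, since these are exactly the two quantities appearing in Definition \ref{BAcpn} and in the statement of the Fact.

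The remaining step is purely logical: rewrite the defining implication of $[B,A]$-\brfrt compactness in contrapositive form while keeping the outer universal quantifier over all open sequences intact. Thus $X$ is $[B,A]$-\brfrt compact if and only if, for every open sequence $(O_a)_{a\in A}$, the condition $\bigcup_{a\in H}O_a\neq X$ for all $H\in B$ forces $\bigcup_{a\in A}O_a\neq X$. Substituting the complement translation of the previous paragraph turns this verbatim into the closed-set statement. There is essentially no obstacle here; the only point requiring care is the contrapositive itself, where the hypothesis $\bigcup_{a\in A}O_a=X$ of the definition must reappear, after negation, as the conclusion $\bigcup_{a\in A}O_a\neq X$ (hence as $\bigcap_{a\in A}C_a\neq\emptyset$), rather than as a hypothesis, and where the clause ``for all $H\in B$'' must be seen to negate ``there is $H\in B$'' correctly.
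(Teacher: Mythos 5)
Your argument is correct and is exactly the paper's proof: the paper simply states that the Fact is immediate from the definition in contrapositive form together with taking complements, which is precisely the De Morgan plus contraposition argument you spell out. No difference in approach, only in the level of detail.
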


\begin{proof}
Immediate from the definition of $[ B, A ]$-\brfrt compactness, 
in contrapositive form, and by taking complements.
\end{proof}

We now state some other easy facts about 
$[ B, A ]$-\brfrt compactness.

\begin{fact} \labbel{fac}
Suppose that $X$ is a topological space, $A$ is a set, and $B ,  B'\subseteq \mathcal P(A)$.
\begin{enumerate}    
\item  
If $A \in B $, then every topological space  is
$[ B, A ]$-\brfrt compact. In particular,
 every topological space   is $[ \{ A \}, A ]$-\brfrt compact.
\item
If $B \subseteq B'$,
 and  $X$ is $[ B, A ]$-\brfrt compact, 
then $X$ is $[ B', A ]$-\brfrt compact.
\item
More generally, if, for every $H \in B$, there is 
$H' \in B'$ such that $H \subseteq H'$, 
then every  
 $[ B, A ]$-\brfrt compact topological space 
 is $[ B', A ]$-\brfrt compact.
\item
If $X$ is $[ B , A  ]$-\brfrt compact, and $A' \subseteq A$, then
$X$ is $[ B| _{A'}  , A'  ]$-\brfrt compact,
where $B| _{A'}= \{ H \cap A' \mid  H \in B\}  $.
\item
Suppose that, for every $H \in B$,
$D_H \subseteq \mathcal P(H)$,
and let $D= \bigcup _{H \in B}D_H $.
If $X$ is $[ B , A  ]$-\brfrt compact, and 
$[ D_H , H  ]$-\brfrt compact, for every  $H \in B$,
then $X$ is
$[ D , A  ]$-\brfrt compact.
\item
If $C$ is a set, $f:C \to A$
is a surjective function,
and $D= \{ f ^{-1} (H) \mid H \in B\}$,
then every $[ B , A  ]$-\brfrt compact
topological space is $[ D , C ]$-\brfrt compact.
 \end{enumerate}
 \end{fact}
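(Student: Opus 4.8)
The plan is to handle the six items one at a time; each reduces directly to unwinding Definition \ref{BAcpn}, and none requires more than manipulating unions of the given open sets. I would phrase everything in terms of open covers rather than the closed-set reformulation of Fact \ref{factclosed}, since the hypotheses are already stated in covering form.

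For (1), given any open cover $(O_a)_{a \in A}$ with $\bigcup_{a \in A} O_a = X$, I would simply take $H = A$, which lies in $B$ by assumption and trivially satisfies $\bigcup_{a \in H} O_a = X$; the special case follows since $A \in \{A\}$. Items (2) and (3) are equally direct, and I would obtain (2) as the instance $H' = H$ of (3): starting from a cover, $[B,A]$-compactness yields $H \in B$ with $\bigcup_{a \in H} O_a = X$, then I would choose $H' \in B'$ with $H \subseteq H'$ and note that enlarging the index set can only enlarge the union, so $\bigcup_{a \in H'} O_a = X$ as well.

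For (4), the one mild trick is passing from a cover indexed by the smaller set $A'$ to one indexed by $A$: given $(O_a)_{a \in A'}$ with union $X$, I would extend it to a family over $A$ by setting $O_a = \emptyset$ for $a \in A \setminus A'$, which is still an open cover. Applying $[B,A]$-compactness gives $H \in B$, and since the padded sets are empty, $\bigcup_{a \in H \cap A'} O_a = \bigcup_{a \in H} O_a = X$, with $H \cap A' \in B|_{A'}$ as required. Item (5) is a two-step chaining: from a cover over $A$, $[B,A]$-compactness produces $H \in B$ with $\bigcup_{a \in H} O_a = X$, so $(O_a)_{a \in H}$ is a cover indexed by $H$; then $[D_H, H]$-compactness produces $K \in D_H$ with $\bigcup_{a \in K} O_a = X$, and $K \in D$ since $D_H \subseteq D$.

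Item (6) is the only one involving a genuine change of index set through $f$, and is where I would be most careful, though the argument remains short. Given an open cover $(P_c)_{c \in C}$ of $X$, I would push it forward along $f$ by setting $O_a = \bigcup_{c \in f^{-1}(a)} P_c$ for each $a \in A$; every $O_a$ is open, and since $\bigcup_{a \in A} f^{-1}(a) = C$ this is again a cover of $X$. Applying $[B,A]$-compactness yields $H \in B$ with $\bigcup_{a \in H} O_a = X$, and then $K := f^{-1}(H) \in D$ satisfies $\bigcup_{c \in K} P_c = \bigcup_{a \in H} O_a = X$ via the identity $f^{-1}(H) = \bigcup_{a \in H} f^{-1}(a)$. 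The only point I expect to need a second look is the bookkeeping about which index set each union ranges over; I note that surjectivity of $f$ is not actually required for this implication, so I would invoke it only for tidiness of $D$ or else remark that it can be dropped here.
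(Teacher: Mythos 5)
Your proof is correct and, for item (6) --- the only item for which the paper actually supplies an argument --- it coincides with the paper's proof of pushing the cover forward along $f$ via $O_a = \bigcup_{c \in f^{-1}(a)} P_c$; items (1)--(5) are left as easy facts in the paper, and your verifications of them (including the empty-set padding in (4) and the two-step chaining in (5)) are all sound. Your side remark that surjectivity of $f$ is not actually needed in (6) is also correct, since $\bigcup_{a \in A} O_a = \bigcup_{c \in C} P_c = X$ holds regardless, even though the paper parenthetically invokes surjectivity at that step.
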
   

Fact (5) above is a broad generalization of standard  arguments, e.~g., the argument showing
that Lindel\"ofness and countable compactness imply compactness.

Fact (6) follows immediately from the fact that a union of open sets 
is still open.
Indeed, if 
$(O_c) _{c \in C} $ is an open cover of $X$, 
then  
$(Q_a) _{a \in A} $ is an open cover of $X$,
where  
$Q_a =\bigcup_{f(c)=a}O_c$,
for $a \in A$ (using the assumption that $f$ is surjective). 

\begin{remark} \labbel{noloss} 
Let us say that  $B \subseteq  \mathcal P(A)$ is \emph{closed under
subsets} if and only if, whenever   $H' \in B$ and $H \subseteq H'$, 
then $H \in B$. 
Notice that, by (2) and (3) above, if 
$B' \subseteq \mathcal P(A)$
and $ B $ is the smallest subset of $ \mathcal P(A)$
which contains $B'$ and is closed under 
subsets, then a topological space $X$ is 
 $[ B, A ]$-\brfrt compact if and only if it is
 $[ B', A ]$-\brfrt compact.
Thus, in the definition of  $[ B, A ]$-\brfrt compactness,
it is no loss of generality to consider only  
those $B$ which are closed under subsets.
\end{remark}   

If $X$ is a topological space, and $ P \subseteq X$, we denote by $ \overline{P} $ the closure 
of $P$ in $X$, and by $P^\circ$ its interior. The topological space  in which we are taking closure and interior
shall always be clear from the context.
  
If $B \subseteq \mathcal P(A)$ and $a \in A$,
we let $a^<_B= \{ H \in B \mid a \in H\} $.

\begin{theorem} \labbel{e} 
Suppose that $A$ is a set, $B \subseteq \mathcal P(A)$, and $X$ is a topological space. Then the following conditions are equivalent.
\begin{enumerate} 
\item
$X$ is $ [ B, A]$-\brfrt compact.
\item
For every  sequence  $( P _ a ) _{ a \in A} $
of subsets of $X$, if,
for every $H \in B$, 
$ \bigcap _{ a \in H}  P_ a \not= \emptyset $,
then  
$ \bigcap _{ a \in A}  \overline{P}_ a \not= \emptyset $.

\item
Same as (2),
with the further assumption that $|P_a| \leq |a^<_B|$,
for every $a \in A$.

\item
For every  sequence 
$\{ x_H \mid H \in B \}$
of elements of  $X$,
it happens that
    $ \bigcap _{ a \in A} 
 \overline{ \{ x_H \mid H \in a^<_B \}}
  \not= \emptyset $.

\item
For every  sequence 
$\{ x_H \mid H \in B \}$
of elements of  $X$,
there is $x \in X$ 
such that, for 
 every neighborhood $U$ of $x$ in $X$,
and for every $a \in A$,
there is $H \in B$ such that $a \in H$
and $x_H \in U$.    
 
\item
For every  sequence 
$\{ Y_H \mid H \in B \}$
of nonempty subsets of  $X$,
it happens that
    $ \bigcap _{ a \in A} 
 \overline{ \bigcup \{ Y_H \mid H \in a^<_B \}}
  \not= \emptyset $.
  \item
 For every  sequence 
$\{ D_H \mid H \in B \}$
of nonempty closed subsets of  $X$,    $ \bigcap _{ a \in A} 
 \overline{ \bigcup \{ D_H \mid H \in a^<_B \}}
  \not= \emptyset $.

\item
For every  sequence 
$\{ O_H \mid H \in B \}$
of open proper subsets of  $X$,
 if, for every $a \in A$, we put  
$ Q_ a= \left( \bigcap  \{ O_H \mid H \in a^<_B \} \right)^\circ  $,
then $(Q_a)_{ a \in A}$  is not a cover of $X$.
\end{enumerate}
 \end{theorem}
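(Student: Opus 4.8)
The plan is to establish all eight conditions as a single cycle of implications $(1)\Rightarrow(2)\Rightarrow(3)\Rightarrow(4)\Rightarrow(5)\Rightarrow(6)\Rightarrow(7)\Rightarrow(8)\Rightarrow(1)$. Four of these arrows come essentially for free: $(2)\Rightarrow(3)$ and $(6)\Rightarrow(7)$ merely specialize a statement to a subclass of sequences (those with $|P_a|\leq|a^<_B|$, respectively those consisting of closed sets); $(4)\Rightarrow(5)$ is nothing but the definition of closure, since a point $x$ lies in $\overline{\{x_H\mid H\in a^<_B\}}$ exactly when every neighborhood of $x$ contains some $x_H$ with $a\in H$; and $(7)\Rightarrow(8)$ is a passage to complements, setting $D_H=X\setminus O_H$ and using $(X\setminus S)^{\circ}=X\setminus\overline{S}$, which turns the nonemptiness of $\bigcap_{a\in A}\overline{\bigcup\{D_H\mid H\in a^<_B\}}$ into the assertion that $(Q_a)_{a\in A}$ is not a cover. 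So the substance of the proof is carried by the remaining four arrows, two of which cross between the $A$-\brfrt indexed conditions (1)--(3) and the $B$-\brfrt indexed conditions (4)--(8).

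For $(1)\Rightarrow(2)$ I would argue by contraposition, in the spirit of Fact \ref{factclosed} but with closures inserted. Given $(P_a)_{a\in A}$ with $\bigcap_{a\in H}P_a\neq\emptyset$ for every $H\in B$, suppose $\bigcap_{a\in A}\overline{P}_a=\emptyset$; then the open sets $X\setminus\overline{P}_a$ cover $X$, so $[B,A]$-\brfrt compactness furnishes some $H\in B$ with $\bigcup_{a\in H}(X\setminus\overline{P}_a)=X$, i.e.\ $\bigcap_{a\in H}\overline{P}_a=\emptyset$, contradicting $\emptyset\neq\bigcap_{a\in H}P_a\subseteq\bigcap_{a\in H}\overline{P}_a$. (With closed $P_a$ this is just Fact \ref{factclosed}, so $(1)$ and $(2)$ are really two faces of the same computation.)

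The heart of the matter is the conversion of a covering hypothesis into an accumulation statement, which happens in $(3)\Rightarrow(4)$. Given a family $\{x_H\mid H\in B\}$, I would set $P_a=\{x_H\mid H\in a^<_B\}$, so that $|P_a|\leq|a^<_B|$ as (3) requires. The key observation is that $x_H\in P_a$ whenever $a\in H$ (because $a\in H$ means $H\in a^<_B$); hence $x_H\in\bigcap_{a\in H}P_a$, the hypothesis of (3) is met, and its conclusion $\bigcap_{a\in A}\overline{P}_a\neq\emptyset$ is exactly (4). The step $(5)\Rightarrow(6)$ is the analogous move for subsets: choosing a representative $y_H\in Y_H$ and applying (5) to the points $y_H$ produces a point $x$ every neighborhood of which, for each $a$, meets some $Y_H$ with $a\in H$ (since $y_H\in Y_H$), which is precisely the assertion that $x\in\bigcap_{a\in A}\overline{\bigcup\{Y_H\mid H\in a^<_B\}}$.

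The step I expect to be the main obstacle, because it is the one returning from the $B$-\brfrt indexed setting to an honest cover, is $(8)\Rightarrow(1)$, which I would prove contrapositively. If $X$ is not $[B,A]$-\brfrt compact, pick an open cover $(O_a)_{a\in A}$ admitting no subcover indexed by a member of $B$, and set $O_H=\bigcup_{a\in H}O_a$ for $H\in B$; each $O_H$ is open and, by the failure of a subcover, a proper subset of $X$. For $a\in H$ one has $O_a\subseteq O_H$, hence $O_a\subseteq\bigcap\{O_H\mid H\in a^<_B\}$, and as $O_a$ is open this gives $O_a\subseteq Q_a$; therefore $(Q_a)_{a\in A}$ covers $X$, contradicting (8). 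The degenerate cases are harmless: if some $a^<_B$ is empty then $Q_a=X$ and the same conclusion holds, and one checks separately that in such cases every one of (1)--(8) fails, so the equivalence is maintained. The only real care needed throughout is bookkeeping the direction of each inclusion and keeping the index switch between $a\in H$ and $H\in a^<_B$ straight.
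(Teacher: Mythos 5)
Your proof is correct and follows essentially the same route as the paper's: the same constructions ($P_a=\{x_H\mid H\in a^<_B\}$ for $(3)\Rightarrow(4)$, choice of representatives for the subset versions, and the union/intersection of a witnessing cover to return to $(1)$) appear in both. The only cosmetic difference is that you close the cycle through $(8)\Rightarrow(1)$ with open sets, whereas the paper proves the complement-dual implication $(7)\Rightarrow(1)$ with closed sets and obtains $(8)\Leftrightarrow(7)$ by complementation.
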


\begin{proof}
(1) $\Rightarrow $  (2) 
Just take $C_ a=\overline{P}_ a$, for
$ a \in A$, and use the equivalent formulation 
of $ [ B, A]$-\brfrt compactness in terms of closed sets, as given in Fact \ref{factclosed}. 

(2) $\Rightarrow $  (3) is trivial.

(3) $\Rightarrow $  (4)
For $a \in A$,
put  
$ P_a= \{ x_H \mid H \in a^<_B \} $.
Thus 
$| P_a| \leq  | a^<_B | $.
Moreover, if
$a \in H \in B$, then $x_H \in P_a$,
hence $x_H \in \bigcap _{ a \in H}  P_ a $,
thus  $\bigcap _{ a \in H}  P_ a  \not= \emptyset $.
By applying (3), 
 $\bigcap _{ a \in A}  \overline{P_ a} = \bigcap _{ a \in A} 
 \overline{ \{ x_H \mid H \in a^<_B \}}
  \not= \emptyset $.

(5) is clearly a reformulation of (4), hence they are equivalent.

(4) trivially implies (6), since if, for every $H \in B$, we choose 
$x_H \in Y_H \not= \emptyset $, then  $ \bigcap _{ a \in A} 
 \overline{ \bigcup \{ Y_H \mid H \in a^<_B \}}
\supseteq
 \bigcap _{ a \in A} 
 \overline{ \{ x_H \mid H \in a^<_B \}}
 $, and this latter set is nonempty by (4).
  
(6) $\Rightarrow $   (7) is trivial, since (7) is a particular case of  (6).

(7) $\Rightarrow $   (1)
We shall use the equivalent formulation of $ [ B, A]$-\brfrt compactness
given by Fact \ref{factclosed}. 
Suppose that 
$( C _ a) _{ a \in A} $
are closed subsets of $X$ such that 
$  \bigcap _{ a \in H}  C_ a \not= \emptyset $,
for every $H \in B$.
For each $H \in B$,
put
$ D_H = \bigcap _{ a \in H}  C_ a $,
thus $C_a \supseteq D_H$,
whenever $a \in H$, hence,
for every $a \in A$,   
$ C_a \supseteq 
 \overline{ \bigcup \{ D_H \mid H \in a^<_B \}} $.
By (7),
$ \bigcap _{ a \in A} C_a \supseteq 
\bigcap _{ a \in A} 
 \overline{ \bigcup \{ D_H \mid H \in a^<_B \}}
  \not= \emptyset $.

(8) $\Leftrightarrow $  (7)
is immediate by taking complements.
 \end{proof}

Notice that Conditions (6) and (7) can be reformulated in a way similar to the reformulation (5) of (4). 
As we shall explain in detail in Section \ref{convcov},  Condition 
(5) can be seen as a statement that asserts the existence of some kind of 
accumulation point for the sequence  $\{ x_H \mid H \in B \}$.

\begin{remark} \labbel{classical}    
Some particular cases of Theorem \ref{e} are known, sometimes being classical results.

As we mentioned in the introduction, countable compactness is the particular case of 
Definition \ref{BAcpn} when $A$ is countable (without loss of generality we can take $A= \omega $), and $B$ is the set of all finite subsets of $ \omega$. It is easy to see that we can equivalently take $B= \{ [0, n) \mid n \in \omega \} $; this follows,
for example, from Remark \ref{noloss}. In a different context, 
a similar argument has been exploited in \cite{tapp2}; see in particular Remark 24 there.
Remark \ref{noloss} (and Fact \ref{fac}(2)(3))  have further interesting applications which shall be presented elsewhere.

 Recall that, for an infinite cardinal $\lambda$,
a topological space $X$ satisfies $\CAP_ \lambda  $ if and only if every subset
$Y \subseteq X $ with $| Y| = \lambda $ 
has a 
complete accumulation point $x$, that is, a point $x$ such that  
$| U \cap Y|= \lambda $, for every neighborhood $U$ of $x$.

For the above choice of $A= \omega $ and $B= \{ [0, n) \mid n \in \omega \} $,
the equivalence of (1) and (5) in Theorem \ref{e} shows that countable compactness
is equivalent to  $\CAP_ \omega $. This is because a sequence
$(x_H) _{H \in B} $ can be thought as a sequence 
$(x_n) _{n \in \omega } $, via the obvious correspondence between 
$B$ and $ \omega$. The astute reader 
will notice that the above argument (and Theorem \ref{e}, in general)
deals with sequences, while the definition 
of $\CAP_ \omega $ deals with  subsets;
that is, in the former case, repetitions are allowed, while in the latter case they are not allowed. 
 However, it is easy to see that, in the particular case at hand, 
the difference produces no substantial effect. 
See Remark \ref{seqvssubs} below and \cite[Section 3]{tproc2} for further details. 

Arguments similar to the above ones 
can  be carried over, with no essential change, for every
regular cardinal $\lambda$. In this case, we get that  $ [ \lambda  , \lambda ]$-\brfrt compactness
is equivalent to $\CAP_ \lambda  $. These results are very classical, and, indeed,
are immediate consequences of \cite[Section 9]{AU}.
For $\lambda$ singular, the  characterization
of $ [ \lambda , \lambda ]$-\brfrt compactness is not that neat.
The point is that, for $\lambda$ regular, a subset of $\lambda$ cofinal in $\lambda$ 
has necessarily cardinality $\lambda$; this is false when $\lambda$ 
is singular.
\end{remark}

We have discussed in some detail the 
 equivalence between   $\CAP_ \lambda  $ and $ [ \lambda  , \lambda ]$-\brfrt compactness, 
for $\lambda$ regular, since it might be seen as a prototype of all the
results proved in the present paper. In fact, we establish an interplay
between notions of compactness, on one hand, and  satisfaction of  accumulation properties, on the other hand. Such an
 interplay  holds in very general situations, sometimes rather  far removed from the above particular and nowadays standard  example.

Turning to the more general notion of   $ [ \mu, \lambda ]$-\brfrt compactness, the special case of the equivalence of (1) and (2)
in Theorem \ref{e} appears in  \cite[Theorem 1.1]{Ga}. See \cite[Lemma 5(b)]{Vfund}. 
For $ [ \mu , \lambda ]$-\brfrt compactness,
Conditions (1)-(4) in Theorem \ref{e} are the particular case of  
\cite
[Proposition 32 (1)-(4)]
{tapp2}, taking $\mathcal F$ to be the set of all singletons of $X$. 
In the particular case $\mu= \omega $, 
$ [ \omega   , \lambda ]$-\brfrt compactness is usually called \emph{initial $\lambda$-compactness}. In this case there are many more   characterizations: see \cite[Section 2]{St} and \cite{Vfund}. 
Some equivalences hold also for $\mu> \omega $, under 
additional assumptions.
See \cite[Theorem 2]{Vfund}. 

The equivalences in Theorem \ref{e} have been inspired by 
results from Caicedo \cite[Section 3]{C}, 
who implicitly  uses similar methods in order to deal with  $ [ \mu, \lambda ]$-\brfrt compactness.
In our opinion, Caicedo \cite{C} 
has provided an essentially new point of view about $ [ \mu, \lambda ]$-\brfrt compactness.
Apart from \cite{C}, it is difficult to track back which parts of Theorem \ref{e},
in this particular case, have appeared in some form or another in the literature.
This is
due to the hidden assumption, used by many authors, of the regularity of some of the cardinals  involved, or of some forms of the generalized continuum hypothesis. See \cite{Vfund}. 

Theorem \ref{e} is new in the particular case of $ [ \beta  , \alpha  ]$-\brfrt compactness, for $\beta$ and $\alpha$ ordinals. Since it  was our  leading motivation for working
on such matters, we state explicitly the equivalence of (1) and (4) in Theorem \ref{e}
for this special case.
We let $\mathcal P _ \beta ( \alpha )$ denote the set of all subsets of $ \alpha $ 
having order type $<\beta$. 
Notice that this notation is consistent with the case introduced before 
when $\alpha$ and $\beta$ are cardinals.

\begin{corollary} \labbel{alphabeta}  
Suppose that  $X$ is a topological space and $\alpha$ and $\beta$ are ordinals.
 Then the  following conditions are equivalent.
\begin{enumerate} 
\item
$X$ is $ [ \beta , \alpha ]$-\brfrt compact.
\item
For every  sequence 
$\{ x_z \mid z \in \mathcal P _ \beta ( \alpha ) \}$
of elements of  $X$, if, 
for $\gamma \in \alpha $, 
we put $P_ \gamma = \{ x_z \mid z \in \mathcal P _ \beta ( \alpha ) \text{ and  } \gamma \in z \}$,
then
    $ \bigcap _{ \gamma \in \alpha } 
 \overline{ P}_ \gamma 
  \not= \emptyset $.
\end{enumerate}
 \end{corollary}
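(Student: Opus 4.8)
The plan is to read this off directly from Theorem \ref{e}, since $[ \beta , \alpha ]$-\brfrt compactness is by definition the instance of $[ B, A ]$-\brfrt compactness obtained by setting $A = \alpha$ and $B = \mathcal P _ \beta ( \alpha )$, the family of subsets of $\alpha$ of order type $< \beta$. Under this choice, condition (1) of the corollary is literally condition (1) of Theorem \ref{e}, so the whole task reduces to checking that condition (4) of Theorem \ref{e} becomes condition (2) of the corollary once the general notation is unwound.

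First I would observe that a sequence $\{ x_H \mid H \in B \}$ indexed by $B$ is, in this setting, exactly a sequence $\{ x_z \mid z \in \mathcal P _ \beta ( \alpha ) \}$. Next, specializing the abbreviation $a^<_B = \{ H \in B \mid a \in H \}$ to an index $a = \gamma \in \alpha = A$ gives $\gamma^<_B = \{ z \in \mathcal P _ \beta ( \alpha ) \mid \gamma \in z \}$, whence $\{ x_H \mid H \in \gamma^<_B \} = \{ x_z \mid z \in \mathcal P _ \beta ( \alpha ) \text{ and } \gamma \in z \}$, which is precisely the set $P_ \gamma$ defined in the statement. Consequently the conclusion $\bigcap_{a \in A} \overline{\{ x_H \mid H \in a^<_B \}} \neq \emptyset$ of Theorem \ref{e}(4) turns into $\bigcap_{\gamma \in \alpha} \overline{P}_ \gamma \neq \emptyset$, matching condition (2) of the corollary verbatim.

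Since the two statements coincide once these substitutions are made, the corollary is immediate from the equivalence of (1) and (4) in Theorem \ref{e}. I do not expect any genuine obstacle here: the only thing requiring care is the bookkeeping confirming that the notation $\mathcal P _ \beta ( \alpha )$ (subsets of order type $< \beta$) is the same $B$ as in Theorem \ref{e}, and that $a^<_B$ specializes to the indexing set used to build $P_ \gamma$. This is a purely notational verification rather than a mathematical difficulty, which is exactly why it makes sense to present the corollary as an explicit transcription highlighting the ordinal case, rather than as a place where any new argument is introduced.
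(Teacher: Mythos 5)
Your proposal is correct and matches the paper exactly: the corollary is stated there precisely as the specialization of the equivalence (1)~$\Leftrightarrow$~(4) of Theorem~\ref{e} to $A=\alpha$ and $B=\mathcal P_\beta(\alpha)$, with $\gamma^<_B$ unwinding to the index set defining $P_\gamma$. No further argument is needed.
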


As we mentioned in the introduction, also $D$-compactness turns out to be equivalent to a covering property
in the sense of Definition \ref{BAcpn}. More generally, many notions
of being closed under convergence, or under taking particular kinds of accumulation points are equivalent to a covering property, as we shall show in the next section. Theorem \ref{e} applies in each of
the above cases.

As a final remark in this section, let us mention that Condition (5) in Theorem 
\ref{e} suggests the following relativized notion of  a cluster point
of a net.

\begin{definition} \labbel{net}
Suppose that $( \Sigma, \leq)$ is a directed set, and $(x_ \sigma ) _{ \sigma \in \Sigma} $
is a net in a topological space $X$.
If $T \subseteq \Sigma$, we say that $x \in X$  is
a \emph{cluster point restricted to $T$} of the net 
$(x_ \sigma ) _{ \sigma \in \Sigma} $ 
if and only if for every $ \tau  \in T$ and every neighborhood $U$ of $x$,
there is $ \sigma \in \Sigma $ such that $ \sigma \geq \tau $ and
$x_ \sigma \in U$.
 \end{definition}   

In fact, if $\Sigma = B \subseteq \mathcal P(A)$, $\leq$ is inclusion, and we suppose that 
$B$ contains all singletons  of $\mathcal P(A)$,
then, in the terminology of Definition \ref{net}, 
 Condition \ref{e}(5) 
asserts that
every $ \Sigma $-indexed net 
 $(x_ \sigma ) _{ \sigma \in \Sigma} $ 
has some cluster point restricted to the set of
all singletons of $\mathcal P(A)$.

This might explain the difficulties in  finding
an equivalent formulation of
$ [ \mu, \lambda ]$-\brfrt compactness in terms
of cluster points of nets \cite{Vfund}. The condition 
in Definition \ref{net} is generally weaker than  
the request for a cluster point: 
the definition of a \emph{cluster point}
of a net is obtained from \ref{net} in the particular case when
$T= \Sigma $ (or, more generally, when $T$ is \emph{cofinal} in $\Sigma$,
that is, $T$ is such that, for every $ \sigma \in \Sigma $, there is
$ \sigma ' \in T$ such that $ \sigma \leq \sigma '$).

\section{Every notion of accumulation (and more) is a covering property} \labbel{convcov} 

An uncompromising way of defining a general notion of ``accumulation point''   
is simply to fix some index set $I$, and to prescribe exactly
which subsets of $I$ are allowed to be the (index sets
of) elements contained in the neighborhoods of some $x$---
 supposed to be an accumulation point  of some  $I$-indexed sequence.

Just to present the simplest nontrivial example, if $I$ is infinite, and we allow all subsets of  $I$
with cardinality  $|I|$, we get the  notion of
a complete accumulation point 
(for sequences all whose points are distinct).

 To state it  precisely, let us give the following definition.

\begin{definition} \labbel{seq}
Let $I$ be a set,  $E$ be a subset of $ \mathcal P(I)$,
and $\mathbf x =(x_i) _{i \in I} $ 
be an $I$-indexed sequence  of elements of some topological space $X$. 

If $U \subseteq X$, let  $I _{\mathbf x,U}=\{ i \in I \mid x_i \in U\}$.
We say that a point $x \in X$ is an
\emph{accumulation point in the sense of} $E$,
or simply an \emph{$E$-accumulation point},  
 of the sequence
$\mathbf x $ 
 if and only if
$I _{\mathbf x,U}\in E$, 
for every open neighborhood $U$ of $x$.

We say that $X$ satisfies the \emph{$E$-accumulation property}
if and only if every $I$-indexed sequence of elements of $X$
has some (not necessarily unique) accumulation point in the sense of $E$.
 \end{definition} 

\begin{remark} \labbel{minim}    
Trivially, if $E = \mathcal P(I)$, then every space satisfies the
$E$-\brfrt accumulation property. 
Under certain assumptions, we can get a smaller ``minimal'' $E$.

For every $I$-indexed sequence $\mathbf x$ of elements of $X$,
and every $x \in X$, there is a smallest set 
$E \subseteq  \mathcal P(I)$
such that $x$ is 
an $E$-accumulation point  
 of 
$\mathbf x$: just take 
$E = E _{\mathbf x , x} = \{  I _{\mathbf x,U} | U \text{ an open neighborhood of } x \} $.
Notice that $E _{\mathbf x , x}$ is closed 
under finite intersections and arbitrary unions.

More generally, if $ \Sigma $ 
is a set of $I$-indexed sequences  of elements of $X$
and, for every $\mathbf x \in \Sigma $, $Y_\mathbf x$
is a subset of $X$, then
 $ E=\bigcup  \{ E _{\mathbf x , x} \mid
\mathbf x \in \Sigma, \ x \in Y_\mathbf x \} $
is the smallest set $E$ such that
$x$ is 
an $E$-accumulation point  
 of 
$\mathbf x$,
for every $\mathbf x \in \Sigma $ and $ x \in Y_\mathbf x$.
In other words, if we fix in advance
some abstract relation of being an accumulation point of a sequence,
then there is a minimal $E$ which realizes this relation
(of course, in general, $E$ will realize many more instances of accumulation).
\end{remark}

\begin{remark} \labbel{seqvssubs}   
As we hinted before  Definition \ref{seq}, if $I$ is infinite, and 
$E$ is the set of all subsets of $I$  of cardinality $|I|$,
then the notion of an $E$-accumulation point corresponds to 
that of a complete accumulation point.
There is a
technical difference that should be mentioned:
 here we are dealing with sequences, rather than subsets.
In order to get the standard definition of a complete
accumulation point, we should require that
all the elements of the sequence are distinct, otherwise some
differences might occur. 
However, if $|I|$ is a regular cardinal, then a topological space satisfies 
$\CAP _{|I|}  $ if and only if it satisfies the $E$-accumulation property,
for the above $E$.

The whole matter has been discussed in detail
in \cite[Section 3]{tproc2}, see in particular Remark 3.2 and Proposition 3.3 there,
taking $\mathcal F$ to be the set of all singletons of $X$. We believe that, in general, dealing with sequences 
is the most natural way; for sure, it is the best way for our
purposes here.
\end{remark} 

\begin{remark} \labbel{resemb}  
Definition \ref{seq} has some resemblance with 
the notion of filter convergence. However,  
we are not asking  $E$ to be necessarily a filter. This is because
 we want to include notions
of accumulation and since, for example,  in the case
of  complete accumulation points the corresponding
$E$ is not  closed under intersection.
Indeed, the intersection  of two subsets of  $I$
of cardinality  $|I|$ may have cardinality strictly smaller than $|I|$.

Of course, given some \emph{fixed} sequence  $(x_i) _{i \in I} $ and some \emph{fixed} 
element $x \in X$, the topological relations between  $(x_i) _{i \in I} $
and $x$ are completely determined by the (possibly improper) filter $F$  
generated by the sets $\{ i \in I \mid x_i \in U\}$, $U$ varying among 
the neighborhoods of $x$ in $X$. However, as the example of complete accumulation points shows, if we allow  
 $x$  vary, we get a more general (and useful) notion by considering 
an arbitrary subset $E$, rather than just a filter.

In this connection, however, see also Remark \ref{bastfiltr}. 
\end{remark}  

Definition \ref{seq}  incorporates essentially all possible
notions of ``accumulation''. It captures also many
notions of convergence.
For example, a sequence $(x_n) _{n \in \omega } $ 
converges to $x$ if and only if, for every neighborhood $U$
of $X$, the set $ \omega \setminus \{ n \in \omega \mid x_n \in U  \}$ is finite.     
In this case, $I= \omega $ and $E$ consists of the  cofinite subsets of $ \omega$.
In a similar way, we can deal with convergence of transfinite sequences. Actually,
even net convergence is a particular case of Definition \ref{seq}. If
$( \Sigma, \leq)$ is the directed set on which the net is built, then
the net converges to $x$ if and only if 
$x$ is an $E$-accumulation point in the sense of 
Definition \ref{seq} for the following choice of $E$.
Take $I= \Sigma$ and let
$E $ be the set of all subsets of $I$
which contain at least one set of the form $ \sigma^<$,
where, for $\sigma \in \Sigma$, we put
$\sigma^< = \{  \sigma ' \in \Sigma \mid  \sigma \leq \sigma '\} $.
Of course, this is the usual argument showing that net convergence can be seen as 
an instance of filter convergence.    

Definition \ref{seq} is more general. If,
for a net as above,
 we take $E$ to be the set of all subsets of $ \Sigma$ 
which are cofinal in $ \Sigma$, then an $E$-accumulation point corresponds to 
a cluster point of the net. Also the notion of a restricted cluster point,
as introduced in Definition \ref{net}, can be expressed in terms
of $E$-accumulation, for some appropriate $E$. 

If $E =D$ is an   ultrafilter    over $ I$, then 
the existence of an $E$-accumulation point 
corresponds exactly to $D$-convergence.

It is rather astonishing that such a bunch of  disparate notions
turn out to be each equivalent to some covering property in the sense of
Definition \ref{BAcpn}, as we shall show in Corollary \ref{rcor} below.

Before embarking in the proof, we notice that also the converse holds, that is, every
covering property is equivalent to some accumulation property. This is
simply a reformulation, in terms of $E$-accumulation, of
the equivalence  (1) $\Leftrightarrow $  (5) in Theorem \ref{e}.

\begin{corollary} \labbel{covisacc}
Suppose that $X$ is a topological space, 
$A$ is a set, $B \subseteq \mathcal P(A)$, and put 
$I=B$ and
 $E= \{ Z \subseteq B \mid \text{for every } a \in A
\text{ there is }
 H \in Z \text{ such that } a \in H \}
= \{ Z \subseteq B \mid \bigcup Z =A\}  $.
 Then the following conditions are equivalent.
  \begin{enumerate}    
\item  
$X$ is $ [ B , A]$-\brfrt compact.
\item
$X$ satisfies the $E$-accumulation property.
  \end{enumerate}  
\end{corollary}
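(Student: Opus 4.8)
The plan is to recognize that Corollary \ref{covisacc} is essentially a translation of the equivalence (1) $\Leftrightarrow$ (5) of Theorem \ref{e} into the vocabulary of $E$-accumulation introduced in Definition \ref{seq}; there is no genuinely new content, only a change of language. Accordingly, I would first verify that the two descriptions of $E$ given in the statement do coincide. A subset $Z \subseteq B$ satisfies ``for every $a \in A$ there is $H \in Z$ with $a \in H$'' exactly when $A \subseteq \bigcup Z$; and since every $H \in Z \subseteq B \subseteq \mathcal P(A)$ is contained in $A$, we always have $\bigcup Z \subseteq A$, so the condition is equivalent to $\bigcup Z = A$.

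Next I would unwind the definition of an $E$-accumulation point for the index set $I = B$. A $B$-indexed sequence is simply a family $\mathbf x = (x_H)_{H \in B}$ of points of $X$, and for an open set $U$ we have $I_{\mathbf x, U} = \{ H \in B \mid x_H \in U\}$. By Definition \ref{seq}, a point $x$ is an $E$-accumulation point of $\mathbf x$ if and only if $I_{\mathbf x, U} \in E$ for every open neighborhood $U$ of $x$. Spelling out membership in $E$ using the first description above, this says precisely: for every open neighborhood $U$ of $x$ and every $a \in A$, there is $H \in B$ with $a \in H$ and $x_H \in U$. This is verbatim the inner condition appearing in Theorem \ref{e}(5).

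It then follows that $X$ satisfies the $E$-accumulation property — i.e. every family $(x_H)_{H \in B}$ admits such a point $x$ — if and only if condition (5) of Theorem \ref{e} holds for the sequence $\{ x_H \mid H \in B\}$. Invoking the equivalence (1) $\Leftrightarrow$ (5) of Theorem \ref{e} immediately yields the desired equivalence (1) $\Leftrightarrow$ (2) of the corollary, and nothing further is required.

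The only point demanding care — and the closest thing here to an obstacle — is aligning the quantifier structure correctly: one must keep the universal quantifier over $a \in A$ that is hidden inside ``$I_{\mathbf x, U} \in E$'' properly nested with respect to the universal quantifier over neighborhoods $U$, so that the combined statement matches exactly the ``for every neighborhood $U$ \dots\ and for every $a \in A$ \dots\ there is $H$'' phrasing of \ref{e}(5). Once the two formulations of $E$ are identified and these quantifiers are matched, the proof is a direct appeal to Theorem \ref{e}.
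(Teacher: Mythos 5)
Your proof is correct and is exactly the paper's argument: the paper also obtains Corollary \ref{covisacc} as a direct reformulation of the equivalence (1) $\Leftrightarrow$ (5) of Theorem \ref{e}, after observing that for $I=B$ the condition ``$I_{\mathbf x,U}\in E$ for every neighborhood $U$'' unwinds to the inner condition of \ref{e}(5). Your verification that the two descriptions of $E$ coincide and your care with the quantifier nesting are exactly the (routine) checks the paper leaves implicit.
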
 

\begin{example} \labbel{0n} 
As in Remark \ref{classical},
if $A= \lambda $ is a regular infinite cardinal, and
$B= \{ [0,  \alpha ) \mid \alpha < \lambda  \}$,
then the $E$ given by Corollary \ref{covisacc}
consists of all subsets of $B$ of cardinality $\lambda$.
In this particular case, Corollary \ref{covisacc}
amounts exactly to the equivalence of $ [ \lambda  , \lambda ]$-\brfrt compactness
and $\CAP_ \lambda  $.
\end{example}   

\begin{example} \labbel{discr} 
As another  simple example, suppose that $A$ is any set, and let
$B= \{ A \setminus \{ a \} \mid a \in A\}$.  
For this choice of $B$, a topological space $X$ is 
 $ [ B , A]$-\brfrt compact if and only if $X$ has no irreducible
cover of cardinality $| A|$.
The $E$ given by Corollary \ref{covisacc} in this situation
is the set of all subsets of $B$ which contain
at least two elements from $B$.  
In this case, the failure of the $E$-accumulation property
means that there exists an $|A|$-indexed sequence of  elements of $X$  
 such that every element of $X$
has a neighborhood intersecting at most one element from the sequence.
If $X$ is $T_1$, this is equivalent to saying that
$X$ has a discrete closed subset of cardinality $|A|$.

In conclusion, in this particular case, Corollary \ref{covisacc}
shows that a $T_1$ topological space has an irreducible cover
of cardinality $\lambda$ if and only if it has a discrete closed subset of cardinality
$\lambda$.
This is a classical result, implicit in the proof of \cite[Theorem 2.1]{AD}.
\end{example}

Now we are going to prove the promised   converse of Corollary \ref{covisacc}, namely, that
every $E$-accumulation property in the sense of
Definition \ref{seq} is equivalent to some covering property,
under the reasonable hypothesis that $E$ is closed under taking supersets.

\begin{definition} \labbel{ast}
If $I$ is a set, and $E \subseteq  \mathcal P(I)$,
we let 
$E^+= \{a\subseteq I \mid a \cap e \not= \emptyset \text{, for every } e \in E \} $.

We say that  $E \subseteq  \mathcal P(I)$ is \emph{closed under
supersets} if and only if, whenever   $e \in E$ and $e \subseteq f \subseteq I$, 
then $f \in E$ (this is half the definition of a \emph{filter}: if $E$ is also closed
under finite intersections, then it is a filter).  
\end{definition}   

Trivially, for every $E$,
we have that $E^+$ is closed under supersets. 
Moreover, it is easy to see that $E ^{++}=E $
if and only if $E$ is closed under supersets. 
Notice that if $E$ is a filter, then $E$ is an ultrafilter if and only if
$E=E^+$.

If $A \subseteq \mathcal P(I)$, then, for every $i \in I$, we put
$i^<_A = \{ a \in A \mid i \in a\} $.

We can now state the main result of this section.

\begin{theorem} \labbel{r}
Suppose that $X$ is a topological space, $I$ is a set,
 $A \subseteq  \mathcal P(I)$, and 
let $E=A^+$.
Then the following conditions are equivalent.
  \begin{enumerate}   
 \item   
$X$ satisfies the $E$-accumulation property.
\item
For every open cover $(O_a) _{a \in A} $
of $X$, there is $i \in I$
such that  
 $(O_a) _{i \in a \in A} $ is a cover of $X$.
\item 
$X$ is $ [ B , A]$-\brfrt compact, for $B= \{i^<_A \mid i \in I\} $.
  \item  
For every sequence
$(x_i) _{i \in I} $  of elements of $X$, 
if, for each $a \in A$,  we put 
$C_a = \overline{\{ x_i \mid i \in a \}}   $, 
then 
$\bigcap _{a \in A}C_a \not= \emptyset  $.  
\end{enumerate} 
 \end{theorem}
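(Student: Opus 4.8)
The plan is to notice that two of the four equivalences are purely definitional, so that the entire content of the theorem reduces to a single bridge between the accumulation side $\{(1),(4)\}$ and the covering side $\{(2),(3)\}$. I would first dispose of $(2)\Leftrightarrow(3)$ by unwinding Definition~\ref{BAcpn} for $B=\{i^{<}_{A}\mid i\in I\}$: an element $H\in B$ is precisely some $i^{<}_{A}=\{a\in A\mid i\in a\}$, and for an open family $(O_a)_{a\in A}$ the condition $\bigcup_{a\in H}O_a=X$ reads $\bigcup_{i\in a\in A}O_a=X$. Hence ``there is $H\in B$ with $\bigcup_{a\in H}O_a=X$'' is verbatim condition (2), so $(2)\Leftrightarrow(3)$ is immediate.

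Next I would establish $(1)\Leftrightarrow(4)$ pointwise, which is the definitional heart of the statement. Fixing a sequence $(x_i)_{i\in I}$ and a point $x\in X$, I would unravel both sides. Since $E=A^+$, the point $x$ is an $E$-accumulation point exactly when, for every neighborhood $U$ of $x$, the set $\{i\mid x_i\in U\}$ meets every $a\in A$; that is, for all $U$ and all $a\in A$ there is $i\in a$ with $x_i\in U$. On the other hand $x\in C_a=\overline{\{x_i\mid i\in a\}}$ means every neighborhood $U$ of $x$ meets $\{x_i\mid i\in a\}$, so $x\in\bigcap_{a\in A}C_a$ says that for all $a\in A$ and all $U$ there is $i\in a$ with $x_i\in U$. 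These differ only by the harmless commutation of $\forall U$ and $\forall a$, so the set of $E$-accumulation points of $(x_i)_{i\in I}$ equals $\bigcap_{a\in A}C_a$. Quantifying over all sequences yields $(1)\Leftrightarrow(4)$.

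It remains to connect these two pairs, and here I would invoke Theorem~\ref{e} with its ``$A$'' taken to be our $A$ and its ``$B$'' taken to be $B=\{i^{<}_{A}\mid i\in I\}$. The relevant computation is that $a^{<}_{B}=\{i^{<}_{A}\mid i\in a\}$ for each $a\in A$. The one delicate point---which I expect to be the main obstacle---is that the $E$-accumulation property speaks of \emph{sequences} indexed by $I$, whereas Theorem~\ref{e} indexes its test families by $B$, and the map $i\mapsto i^{<}_{A}$ need not be injective; thus an $I$-indexed sequence genuinely carries more data than a $B$-indexed one. To prove $(3)\Rightarrow(4)$ for an \emph{arbitrary} sequence $(x_i)_{i\in I}$ I would therefore use the set-valued Condition \ref{e}(6) rather than the point-valued \ref{e}(4): putting $Y_H=\{x_i\mid i^{<}_{A}=H\}$ (which is nonempty, as every $H\in B$ is of the form $i^{<}_{A}$), one checks $\bigcup\{Y_H\mid H\in a^{<}_{B}\}=\{x_i\mid i\in a\}$, so the conclusion of \ref{e}(6) becomes exactly $\bigcap_{a\in A}C_a\neq\emptyset$. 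For the converse $(4)\Rightarrow(3)$, given a $B$-indexed family $\{x_H\mid H\in B\}$ I would pull it back by $y_i=x_{i^{<}_{A}}$; since $\{y_i\mid i\in a\}=\{x_H\mid H\in a^{<}_{B}\}$, applying $(4)$ to $(y_i)_{i\in I}$ returns Condition \ref{e}(4), whence $[B,A]$-compactness by Theorem~\ref{e}.

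Finally, I would remark that the bridge can also be produced directly, sidestepping the indexing bookkeeping altogether, by passing to complements as in Fact~\ref{factclosed}. If $\bigcap_{a\in A}C_a=\emptyset$ then $(X\setminus C_a)_{a\in A}$ is an open cover, and an index $i$ witnessing (2) would force $\bigcap_{i\in a}C_a=\emptyset$, contradicting $x_i\in\bigcap_{i\in a}C_a$; the reverse implication, choosing for each $i$ a point omitted by $(O_a)_{i\in a}$, is symmetric. Either way, the only genuinely delicate step is the reconciliation of the index sets $I$ and $B$; everything else is a matter of unwinding definitions.
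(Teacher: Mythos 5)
Your proof is correct and follows essentially the same route as the paper's: $(2)\Leftrightarrow(3)$ by unwinding definitions, $(1)\Leftrightarrow(4)$ via the pointwise correspondence between $E$-accumulation points and $\bigcap_{a\in A}C_a$ (this is exactly the paper's Proposition \ref{ae}, which you reprove inline), and $(3)\Leftrightarrow(4)$ by specializing Theorem \ref{e}. Your explicit handling of the possible non-injectivity of $i\mapsto i^<_A$ (using Condition \ref{e}(6) rather than \ref{e}(4)) addresses a point the paper passes over silently, but it does not change the substance of the argument.
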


If  $E \subseteq  \mathcal P(I) $ is closed under supersets,
then $E=E ^{++}$,  hence, by taking 
  $A=E^+$, 
we have $E=E ^{++}=A^+$.
Thus we get from Theorem \ref{r} (1)$\Leftrightarrow $(3) that, for every 
$E$  closed under supersets,
 the $E$-accumulation property is equivalent to some compactness 
property in the sense of Definition \ref{BAcpn}. 

\begin{corollary} \labbel{rcor}
For every 
$E \subseteq  \mathcal P(I) $ such that $E$ is  closed under supersets,
there  are $A \subseteq  \mathcal P(I) $ and $B \subseteq  \mathcal P(A) $
such that, for every topological space,
 the $E$-accumulation property
is equivalent to $ [ B , A]$-\brfrt compactness.
 \end{corollary}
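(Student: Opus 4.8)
The plan is to derive the corollary directly from Theorem \ref{r} by engineering an $A$ whose ``positive'' dual $A^+$ recovers $E$ on the nose. The only input beyond Theorem \ref{r} is the elementary observation, recorded right after Definition \ref{ast}, that $E^{++}=E$ precisely when $E$ is closed under supersets. Since closure under supersets is exactly our hypothesis on $E$, I would begin by invoking this to get $E=E^{++}$.

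Next I would make the two choices that the statement promises: set $A=E^+$ and $B=\{i^<_A \mid i \in I\}$. With $A$ chosen this way we have $A^+=(E^+)^+=E^{++}=E$, so the standing hypothesis ``$E=A^+$'' of Theorem \ref{r} is met for this $A$. Now, for an arbitrary topological space $X$, I would quote the equivalence of conditions (1) and (3) in Theorem \ref{r}: condition (1) is verbatim the statement that $X$ satisfies the $E$-accumulation property, while condition (3) is verbatim the statement that $X$ is $[B,A]$-compact for the set $B=\{i^<_A \mid i\in I\}$ just fixed. Since $X$ was arbitrary, this yields the claimed equivalence for all topological spaces, which is exactly what the corollary asserts.

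I expect no genuine obstacle once Theorem \ref{r} is in hand; the whole content is the bookkeeping identity $E=(E^+)^+$, and the conceptual heavy lifting has already been done in proving Theorem \ref{r}. The one point worth flagging, and the place where the hypothesis is essential, is the following: for a general $E$ one only has $E\subseteq E^{++}$, with $E^{++}$ being the superset-closure of $E$, so without assuming closure under supersets the choice $A=E^+$ would reproduce $E^{++}$ rather than $E$ itself, and Theorem \ref{r} would then characterize the $E^{++}$-accumulation property instead of the desired $E$-accumulation property. Assuming $E$ closed under supersets collapses $E^{++}$ back to $E$ and makes the argument go through, so the hypothesis is used exactly once, and precisely where it is needed.
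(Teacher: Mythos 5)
Your argument is correct and is essentially identical to the paper's: the paper likewise uses the identity $E=E^{++}$ for $E$ closed under supersets, sets $A=E^+$ so that $A^+=E$, and then invokes the equivalence (1)$\Leftrightarrow$(3) of Theorem \ref{r} with $B=\{i^<_A \mid i \in I\}$. Your additional remark on why the superset-closure hypothesis is needed matches the paper's discussion following Definition \ref{ast}.
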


Before proving Theorem \ref{r},
we give a stronger local version for the equivalence
of Conditions (1) and (4).

\begin{proposition} \labbel{ae}
Suppose that $X$ is a topological space, $x \in X$, $I$ is a set,
and $(x_i) _{i \in I} $ is a sequence of elements of $X$.
Suppose that  
 $A \subseteq  \mathcal P(I)$, $E=A^+$,
and, 
for  $a \in A$,  put 
$C_a = \overline{\{ x_i \mid i \in a \}}   $.

Then the following conditions are equivalent. 
  \begin{enumerate}   
 \item
x is an $E$-accumulation point   of 
$(x_i) _{i \in I} $. 
  \item  
$ x \in \bigcap _{a \in A}C_a $.  
\end{enumerate}
 \end{proposition}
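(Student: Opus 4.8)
The plan is to prove the equivalence by unfolding each condition to its underlying first-order content and observing that the two resulting statements coincide up to the order of two universal quantifiers. First I would rewrite Condition (1) using the definitions in play. By Definition \ref{seq}, the point $x$ is an $E$-accumulation point of $(x_i)_{i \in I}$ exactly when $I_{\mathbf{x},U}=\{i \in I \mid x_i \in U\}$ lies in $E$ for every open neighborhood $U$ of $x$. Since $E=A^+$, Definition \ref{ast} tells us that $I_{\mathbf{x},U}\in E$ holds precisely when $I_{\mathbf{x},U}\cap a \neq \emptyset$ for every $a \in A$; and $I_{\mathbf{x},U}\cap a=\{i \in a \mid x_i \in U\}$ is nonempty exactly when there is some $i \in a$ with $x_i \in U$. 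Hence Condition (1) is equivalent to the assertion: for every open neighborhood $U$ of $x$ and every $a \in A$, there is $i \in a$ with $x_i \in U$.

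Next I would unfold Condition (2). By definition of the intersection, $x \in \bigcap_{a \in A}C_a$ means $x \in C_a$ for every $a \in A$. For a fixed $a \in A$, the membership $x \in C_a=\overline{\{x_i \mid i \in a\}}$ is, by the definition of topological closure, equivalent to the requirement that every open neighborhood $U$ of $x$ meet the set $\{x_i \mid i \in a\}$, that is, that there be some $i \in a$ with $x_i \in U$. Therefore Condition (2) is equivalent to the assertion: for every $a \in A$ and every open neighborhood $U$ of $x$, there is $i \in a$ with $x_i \in U$.

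The two reformulations differ only in the order of the two universal quantifiers ``for every $a \in A$'' and ``for every open neighborhood $U$ of $x$'', so they are logically identical, and the equivalence of (1) and (2) follows at once. I expect essentially no genuine obstacle here: the substance of the proposition is just the commutativity of two universal quantifiers, and the only care required is to expand correctly the definitions of $E$-accumulation point, of $A^+$, and of closure, so that both conditions visibly reduce to the common statement that $\{i \in a \mid x_i \in U\}\neq \emptyset$ for all $a \in A$ and all neighborhoods $U$ of $x$. This local equivalence is exactly what will later yield the global equivalence (1)$\Leftrightarrow$(4) of Theorem \ref{r} upon quantifying over the sequence and the point $x$.
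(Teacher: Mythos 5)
Your proposal is correct and follows essentially the same route as the paper: both arguments just unfold the definitions of $E$-accumulation point, of $A^+$, and of closure, reducing each condition to the statement that $\{i \in a \mid x_i \in U\}\neq\emptyset$ for all $a \in A$ and all open neighborhoods $U$ of $x$. The paper merely presents this as two separate implications rather than as a single observation about commuting universal quantifiers.
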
 

 \begin{proof} 
If (1) holds, then,
for every open neighborhood $U$
of $x$, the set $e_U=\{ i \in I \mid x_i \in U  \} $ belongs to $ E$.
We are going to show that 
 $ x \in \bigcap _{a \in A}C_a   $.

Hence, suppose that $a \in A$.
For every open neighborhood $U$ of $x$, 
$a \cap e_U \not= \emptyset $, by the first statement, and the definition of $E$.
This means that there is $i \in I$ such that   $i \in a \cap e_U$,
that is, $x_i \in C_a \cap U$, hence  $ C_a \cap U \not= \emptyset $.
Since $C_a$
is closed, and the above inequality 
 holds for every open neighborhood $U$ of $x$, then $x \in C_a$.      
Since this holds for every $a \in A$, we have  
 $ x \in \bigcap _{a \in A}C_a   $.

Now assume that (2) holds. 
Suppose that $U$ is a neighborhood of $x$, and let
$e=\{i \in I \mid x_i \in U\}$. We have to show that
$e \in E= A^+$, that is, $e \cap a \not= \emptyset  $,
for every $a \in A$. Let us fix $a \in A$.      
By (2), $x \in C_a$ and, by the definition of $C_a$,
there is $i \in a$ such that $x_i \in U$.   
By the definition of $e$, $i \in e$, thus
$i \in e \cap a \not= \emptyset $.
Since this argument works for every neighborhood $U$ of $x$,
we have proved (1). 
\end{proof} 

The particular case of Proposition \ref{ae} in which
$x$ is a cluster point of some net is 
Exercise 1.6.A in \cite{E}.
Cf. also \cite[IV]{Cho}.

\begin{proof}[Proof of Theorem \ref{r}]
(2) $\Leftrightarrow $  (3) is immediate from the definitions.

(3) $\Leftrightarrow $  (4) is a particular case of Theorem \ref{e} (1) $\Leftrightarrow $  (4). Indeed, in the situation
at hand, members of $B$ have the form $H=i^<_A$, for 
$i \in I$. For such an $H$, we have that $H \in a^<_B$ 
if and only if $a \in H=i^<_A $ if and only if $i \in a$, thus
Condition (4) in Theorem \ref{e} reads exactly as 
Condition (4) in Theorem \ref{r}. 

(1) $\Leftrightarrow $  (4) is immediate from Proposition \ref{ae}.

Alternatively, the proof of  \ref{r} can be completed avoiding the use of  
 Proposition \ref{ae}, and using Corollary \ref{covisacc} instead.
 Indeed, modulo the obvious correspondence 
between $I$ and $B= \{i^<_A \mid i \in I\} $,
 the $E$ in \ref{covisacc} corresponds exactly to 
the $E$ in Theorem \ref{r}. To check this,
let $I'=B$ and, for $e \subseteq I$, let 
$e' \subseteq I'$ be defined by 
$e'= \{ i^<_A \mid i \in e\}$.
Applying Corollary \ref{covisacc}
to $I'$, the resulting $E'$ turns out to be
equal to 
$\{ e' \subseteq  I' \mid 
\text{for every $a \in A$,  there is $i \in I$ such that  
$i^<_A \in e'$ and $i\in A$} \}=
\{ e' \subseteq I' \mid e \cap a  \not= \emptyset,   
\text{ for every $a \in A$}  \}= 
\{ e'| e \in E\}$.
Corollary \ref{covisacc} thus shows that 
$ [ B , A]$-\brfrt compactness
is equivalent to the $E'$-accumulation property,
 which, through the above mentioned correspondence,  is trivially 
equivalent
to the $E$-accumulation property.
\end{proof}

\begin{remark} \labbel{d} 
If $D$ is an ultrafilter over $I$, 
then, 
by taking $A=D$
in Theorem \ref{r}, the equivalence of (1) and (2) furnishes 
a proof of Proposition \ref{DAB}, since, for $D$ an ultrafilter, 
we have that $D^+=D$.  
\end{remark}   

In \cite[Proposition 17]{tapp2} we also proved a characterization of 
$D$-\brfrt pseudocompactness 
 analogous to 
Proposition \ref{DAB}.
The methods of Sections \ref{top} and \ref{convcov} do apply also in case of notions related
to pseudocompactness. We shall devote the next section to this
endeavor.
Before proceeding, we show that Theorem \ref{r} furnishes a characterization of
weak $M$-compactness. 

\begin{definition} \labbel{weakly}    
If $M$ is a set of ultrafilters over some set $I$, a topological space
is said to be \emph{weakly $M$-compact} if and only if,
for every sequence $(x_i ) _{i \in I}$ of elements of $X$,
there is $x \in X$ such that, for every neighborhood $U$ of $x$,
there is $D \in M$ such that  $\{i \in I \mid x_i \in U\}$. See \cite{GFwc} for more information,
credits, references and a characterization.
 In the terminology of Definition \ref{seq},  
$X$ is  weakly $M$-compact if and only if it satisfies the 
$E$-accumulation property, for $E= \bigcup_{D \in M} D$. 
\end{definition}

\begin{corollary} \labbel{wc}
Suppose that $X$ is a topological space, $M$ is a set of ultrafilters over $I$, and let 
$F = \bigcap _{D \in M} D$.  Then the following conditions are equivalent. 
  \begin{enumerate}    
\item 
$X$ is weakly $M$-compact.
\item
For every open cover $(O_Z) _{Z \in F} $ of $X$,
there is some $i \in I$ such that  
$(O_Z) _{i \in Z \in F} $
is a cover of $X$.
  \end{enumerate} 
 \end{corollary}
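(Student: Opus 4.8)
The plan is to derive Corollary \ref{wc} directly from the equivalence (1) $\Leftrightarrow$ (2) of Theorem \ref{r}, by taking the index family $A$ to be $F$ itself. Recall from Definition \ref{weakly} that weak $M$-compactness is precisely the $E$-accumulation property for $E = \bigcup_{D \in M} D$. Hence, in order to invoke Theorem \ref{r}, it suffices to exhibit some $A \subseteq \mathcal{P}(I)$ with $A^+ = \bigcup_{D \in M} D$; once this is arranged, Condition (1) of Theorem \ref{r} becomes exactly weak $M$-compactness, while Condition (2) of Theorem \ref{r} becomes exactly Condition (2) of the corollary, after relabeling the index $a$ as $Z$ and setting $A = F$.

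The heart of the argument is the identity $F^+ = \bigcup_{D \in M} D$, which I would establish by a double inclusion. For the inclusion $\supseteq$, suppose $a \in D_0$ for some $D_0 \in M$; then for every $f \in F = \bigcap_{D \in M} D$ we have $f \in D_0$, so $a \cap f \in D_0$ since $D_0$ is closed under finite intersections, whence $a \cap f \neq \emptyset$, and therefore $a \in F^+$. For the reverse inclusion $\subseteq$, I would argue by contraposition: if $a \notin D$ for every $D \in M$, then, since each $D$ is an ultrafilter, $I \setminus a \in D$ for every $D \in M$, so $I \setminus a \in F$; but $a \cap (I \setminus a) = \emptyset$, so $a \notin F^+$. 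This last step is where the ultrafilter hypothesis is genuinely used, and it is the one real obstacle in the proof: without the property that $a \notin D$ forces $I \setminus a \in D$, the inclusion would break down (indeed this is exactly why we must intersect \emph{ultrafilters} rather than arbitrary filters).

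With the identity $F^+ = E$ in hand, I would apply Theorem \ref{r} with $A = F$, so that $E = A^+ = \bigcup_{D \in M} D$. Condition (1) of Theorem \ref{r} then states that $X$ satisfies the $E$-accumulation property, which by Definition \ref{weakly} is exactly weak $M$-compactness, that is, Condition (1) of the corollary. Condition (2) of Theorem \ref{r}, namely that for every open cover $(O_a)_{a \in A}$ of $X$ there is some $i \in I$ with $(O_a)_{i \in a \in A}$ still a cover, is literally Condition (2) of the corollary once $A$ is replaced by $F$ and the index $a$ is written as $Z$. Thus the two conditions of the corollary are equivalent, as the corresponding conditions of Theorem \ref{r} are, and the proof is complete.
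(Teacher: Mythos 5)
Your proof is correct and follows exactly the paper's own route: the paper's proof is the one-line ``By Theorem \ref{r}, taking $A=F$, and noticing that $E=A^+=\bigcup_{D\in M}D$.'' You have merely supplied the (correct) double-inclusion verification of the identity $F^+=\bigcup_{D\in M}D$ that the paper leaves implicit.
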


\begin{proof} 
By Theorem \ref{r},
taking $A=F$, and noticing that 
$E=A^+=  \bigcup_{D \in M} D$.  
\end{proof}

\section{Pseudocompactness and the like} \labbel{pseudocpn} 

Definitions \ref{BAcpn} and \ref{seq} 
 can be generalized in the setting
presented in \cite{tproc2,tapp2}; in 
particular, in such a way that incorporates  pseudocompact\brfrt -\brfrt like notions.

Let us  fix a family $\mathcal F$ of subsets of a topological space $X$.
The most interesting case will be
when $\mathcal F =\mathcal O$  is the family
of all the nonempty open sets of $X$. At first reading,
the reader might want to consider this particular case only.

We relativize Definitions \ref{BAcpn} and \ref{seq} to $\mathcal F$.
The notion of $ [ B , A ]$-compactness is modified
by replacing the conclusion with the 
requirement that  the union of the elements of an
appropriate  subsequence intersects ever member of $\mathcal F$.
As far as notions of accumulation are concerned,
 instead of considering accumulation points of elements,
we shall now consider limit points of sequences of elements of $\mathcal F$.

The two most significant cases are when $\mathcal F$ is the family
of all singletons of $X$, in which case we get back the definitions and results
of Sections \ref{top} and \ref{convcov}, and, as we mentioned,  when
 $\mathcal F =\mathcal O$ is the family
of all the nonempty open sets of $X$, in which case we get notions and results
related to pseudocompactness or variants of pseudocompactness.

\begin{definition} \labbel{ABO}
If $A$ is a set, $B \subseteq \mathcal P(A)$, $X$ is a topological space,
and $\mathcal F$ is a family of subsets of  $X$, we say that $X$  is
$\mathcal F$-$ [ B , A ]$-\emph{compact}
if and only if one of the following equivalent conditions holds.
  \begin{enumerate}    
\item   
For every open cover $( O _ a) _{ a \in A   } $
 of $X$, 
there is  $H \in B  $ such that
$\bigcup_{a \in H} O_a$ intersects every member
of $\mathcal F$ (that is, 
for every  $F\in \mathcal F$, there is $a \in H$ such that   
$  O_ a \cap F\not= \emptyset $).
\item   
For every sequence  $( C _ a) _{ a \in A   } $
 of closed subsets of $X$, if,
for every $H \in B  $, 
there exists $F\in \mathcal F$ such that   
$ \bigcap _{ a \in H}  C_ a \supseteq F$,
then  
$ \bigcap _{ a \in A}  C_ a \not= \emptyset $.
\end{enumerate} 
\end{definition}

The equivalence of the above conditions  is trivial, by taking complements.

Notice that, in the particular case when 
$\mathcal F =\mathcal O$,
 the conclusion in  Definition \ref{ABO} (1)  asserts that
$ \bigcup _{ a \in A}  O_ a $ is dense in $X$.

\begin{definition} \labbel{seqO}
Let $I$ be a set,  $E$ be a subset of $ \mathcal P(I)$,
and $(F_i) _{i \in I} $ 
be an $I$-indexed sequence  of subsets of some topological space $X$. 

We say that a point $x \in X$ is a
\emph{limit point in the sense of} $E$,
or simply an \emph{$E$-limit point},  
 of the sequence
$(F_i) _{i \in I} $ 
 if and only if,  
for every open neighborhood $U$
of $x$, the set $\{ i \in I \mid F_i \cap U \not= \emptyset  \} $ belongs to $ E$.

If $\mathcal F$ is a family
of subsets of $X$, we say that $X$ satisfies the \emph{$\mathcal F$-$E$-accumulation property}
if and only if every $I$-indexed sequence of elements of $\mathcal F$ 
has some  limit point in the sense of $E$.
 \end{definition}

In the particular case when $\mathcal F$ is the family of all singletons
 of $X$ Definitions \ref{ABO} and \ref{seqO} reduce to Definitions \ref{BAcpn}
and \ref{seq}, respectively.   

As in Remark \ref{minim},    
 if $E = \mathcal P(I)$, then every space satisfies the
$\mathcal F$-$E$-\brfrt accumulation property, for every $\mathcal F$. 

More generally, for every  sequence $(F_i) _{i \in I} $ of subsets of $X$,
and every $x \in X$, 
there is a smallest set 
$E \subseteq  \mathcal P(I)$
such that $x$ is 
an  $E$-limit point  
 of 
$(F_i) _{i \in I} $: just take 
$E = \{  I _{U} | U \text{ an open neighborhood of } x \} $, where
$I_U = \{ i \in I \mid F_i \cap U  \not= \emptyset   \}$. 
In the same way, and exactly as in Remark \ref{minim}, for every family of 
$I$-indexed sequences,
and  respective families of elements of $X$,  
there is the smallest $E$ such that each element in the family 
is a limit point of the corresponding sequence.

\begin{remark} \labbel{equiv}
If $\mathcal F$ is a family of subsets of some topological space $X$,
let $ \overline{ \mathcal F}$ denote the set of all closures of
elements of $\mathcal F$.

If $\mathcal G$ is another family of subsets of $X$,
let us write 
$\mathcal F \rhd \mathcal G$ to mean that,
 for every $F \in \mathcal F$, there is $G \in \mathcal G$
such that $F \supseteq G$.
We write $\mathcal F \equiv \mathcal G$  to mean that 
both
$\mathcal F \rhd \mathcal G$
and $\mathcal G \rhd \mathcal F$.

It is trivial to see that, in Definitions \ref{ABO} and \ref{seqO},
as well as in the theorems below,
we get equivalent conditions if we replace $\mathcal F$ either by   
$ \overline{ \mathcal F}$, or by $\mathcal G$, in case 
$\mathcal F \equiv \mathcal G$ 
(in this latter case, as far as Definition \ref{seqO} is concerned, 
the condition turns out to be equivalent provided we assume that $E$
is closed under supersets).

 In particular, when $\mathcal F =\mathcal O$, we get equivalent definitions
and results if we replace $\mathcal O$  by either    
  \begin{enumerate}    
\item
the set $\mathcal B$  of the nonempty 
elements of some fixed base
of $X$, or 
\item
the set $ \overline{\mathcal O}$  of 
all nonempty regular closed subsets of $X$, or
\item  
the set $ \overline{\mathcal B}$  of the closures of the nonempty 
elements of some base
of $X$, or
\item
the set $ \mathcal R$  of 
all nonempty regular open subsets of $X$
(since $ \overline{\mathcal R}= \overline{\mathcal O}$).
 \end{enumerate} 
   \end{remark}

The connection of  Definitions \ref{ABO} and \ref{seqO}
with pseudocompactness goes as follows.
A Tychonoff space $X$ is
pseudocompact if and only if 
every countable open cover of $X$  has a finite subcollection
 whose union is dense in $X$. This is Condition
(C$_5$) in \cite{sted7}, and corresponds to the particular case
$A= \omega $, $B= \mathcal P_ \omega ( \omega )$ 
  of $\mathcal O$-$ [ B , A ]$-compactness,
in the sense of Definition \ref{ABO}. 

As another characterization of pseudocompactness,
Glicksberg \cite{Gl}  proved that
 a Tychonoff space $X$ is pseudocompact 
if and only if the following condition holds:

(*) for every  sequence of nonempty open sets of $X$,
there is some
 point $x \in X$ such that  each neighborhood of $x$ 
 intersects
infinitely many elements of the sequence.

This  corresponds to the particular case
of Definition \ref{seqO} in which
$\mathcal F =\mathcal O$,
$I = \omega $ and $E $ equals 
the set of all infinite subsets of $ \omega$.    
Actually, as a very particular case of Theorem \ref{eO}
 (1) $\Leftrightarrow $  (5) below,
and arguing as in Remark \ref{classical}, 
we get another proof of Glicksberg result, in the sense that we get a proof
that (*) and (C$_5$) above are equivalent, for every topological space 
(no separation axiom assumed).

The situation is entirely parallel to the characterization of
countable compactness, which is equivalent to $\CAP _ \omega $, 
as discussed in detail  in Remark \ref{classical}. 
Indeed, conditions analogous to 
(*) and (C$_5$) above are still equivalent when
$ \omega$ is replaced by any infinite regular cardinal; 
see \cite[Theorem 4.4]{tproc2} for exact statements.
This kind of analogies, together with many generalizations, had been the main
theme of \cite{tproc2,tapp2}. 
In the present paper we show that
such analogies can be carried over much further.

The connections between covering properties 
and general accumulation properties, as 
described in Section \ref{convcov}, 
do hold even in the extended setting we are now considering.
In other words, the relationships between the properties
introduced in Definitions \ref{BAcpn}  
and \ref{seq} are exactly the same as the relationships between
the properties introduced in  
Definitions \ref{ABO}  
and \ref{seqO}.
This shall be stated in Theorem \ref{rO}.
 
We first state the result analogous to Theorem \ref{e} (and Corollary \ref{covisacc}).

\begin{theorem} \labbel{eO} 
Suppose that $A$ is a set, $B \subseteq \mathcal P(A)$,  $X$ is a topological space,
and $\mathcal F$ is a family of subsets of $X$. Then the following conditions are equivalent.
\begin{enumerate} 
\item
$X$ is $\mathcal F$-$ [ B, A]$-\brfrt compact.
\item
For every sequence  $( P _ a) _{ a \in A   } $
 of subsets of $X$, if,
for every $H \in B  $, 
there exists $F\in \mathcal F$ such that   
$ \bigcap _{ a \in H}  P_ a \supseteq F$,
then  
$ \bigcap _{ a \in A}  \overline{P}_ a \not= \emptyset $.
\item
Same as (2),
with the further assumption that,
for every $a \in A$,
$P_a  $ is the union of $ \leq \kappa_a $-many elements of
$\mathcal F$, where $\kappa_a= |a^<_B|$.
\item
For every  sequence 
$\{ F_H \mid H \in B \}$
of elements of $\mathcal F$,
  it happens that   $ \bigcap _{ a \in A} 
 \overline{ \bigcup \{ F_H \mid H \in a^<_B \}}
  \not= \emptyset $.
\item
For every  sequence 
$\{ F_H \mid H \in B \}$
of elements of  $\mathcal F$,
there is $x \in X$ 
such that, for every neighborhood $U$ of $x$ in $X$,
and for every $a \in A$,
there is $H \in B$ such that $a \in H$
and $F_H \cap U \not= \emptyset $.    
\item
For every  sequence 
$\{ Y_H \mid H \in B \}$
of subsets of  $X$
such that each $Y_H$ contains some 
$F_H \in \mathcal F$, 
    $ \bigcap _{ a \in A} 
 \overline{ \bigcup \{ Y_H \mid H \in a^<_B \}}
  \not= \emptyset $.
\item
 For every  sequence 
$\{ D_H \mid H \in B \}$
of  closed subsets of  $X$
such that each $D_H$ contains some 
$F_H \in \mathcal F$, 
it happens that  $ \bigcap _{ a \in A} 
 \overline{ \bigcup \{ D_H \mid H \in a^<_B \}}
  \not= \emptyset $.
\item
For every  sequence 
$\{ O_H \mid H \in B \}$
of open  subsets of  $X$
   such that, for each $H \in B$,  there is
$F_H \in \mathcal F$ disjoint from
$O_H$, if, for every $a \in A$, we put  
$ Q_ a= \left( \bigcap  \{ O_H \mid H \in a^<_B \} \right)^\circ  $,
then $(Q_a)_{ a \in A}$  is not a cover of $X$.
\item 
$X$ satisfies the $\mathcal F$-$E$-accumulation property, for
$I=B$ and
 $E= \{ Z \subseteq B \mid \text{for every } a \in A
\text{ there is }
 H \in Z \text{ such that } a \in H \} $.
\end{enumerate}

In each case, we get equivalent conditions by replacing $\mathcal F$  with either
 $ \overline{\mathcal F}$, or 
$\mathcal G$, in case 
$\mathcal F \equiv \mathcal G$.
 \end{theorem}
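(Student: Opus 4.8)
The plan is to transcribe the proof of Theorem \ref{e} essentially verbatim, replacing each point $x_H$ by a member $F_H$ of $\mathcal F$, and replacing the closed-set dual of Fact \ref{factclosed} by its $\mathcal F$-relativized form, namely Definition \ref{ABO}(2). Concretely, I would establish the cycle (1)$\Rightarrow$(2)$\Rightarrow$(3)$\Rightarrow$(4)$\Rightarrow$(6)$\Rightarrow$(7)$\Rightarrow$(1), together with the side equivalences (4)$\Leftrightarrow$(5) and (7)$\Leftrightarrow$(8), and then close the loop by a direct check that (5)$\Leftrightarrow$(9). The final clause about replacing $\mathcal F$ by $\overline{\mathcal F}$ or by any $\mathcal G\equiv\mathcal F$ is precisely Remark \ref{equiv}, so it requires no separate argument.

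For (1)$\Rightarrow$(2) I set $C_a=\overline{P}_a$; when the hypothesis of (2) holds we have $\bigcap_{a\in H}\overline{P}_a\supseteq\bigcap_{a\in H}P_a\supseteq F$, so Definition \ref{ABO}(2) applies and yields $\bigcap_{a\in A}\overline{P}_a\neq\emptyset$. The step (2)$\Rightarrow$(3) is trivial. For (3)$\Rightarrow$(4) I take $P_a=\bigcup\{F_H\mid H\in a^<_B\}$, a union of $|a^<_B|=\kappa_a$ members of $\mathcal F$, and observe that $\bigcap_{a\in H}P_a\supseteq F_H\in\mathcal F$, so the hypothesis of (3) is met and (3) gives (4). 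The equivalence (4)$\Leftrightarrow$(5) is a pure reformulation. For (4)$\Rightarrow$(6) I use $Y_H\supseteq F_H$, whence $\bigcup\{Y_H\mid H\in a^<_B\}\supseteq\bigcup\{F_H\mid H\in a^<_B\}$ and the corresponding closures still have nonempty intersection by (4); (6)$\Rightarrow$(7) is the special case of closed $Y_H$. For (7)$\Rightarrow$(1) I invoke Definition \ref{ABO}(2): given closed $(C_a)_{a\in A}$ with, for each $H$, some $F_H\in\mathcal F$ satisfying $\bigcap_{a\in H}C_a\supseteq F_H$, I set $D_H=\bigcap_{a\in H}C_a$, a closed set containing $F_H$, and argue exactly as in Theorem \ref{e} that $C_a\supseteq\overline{\bigcup\{D_H\mid H\in a^<_B\}}$, so that (7) forces $\bigcap_a C_a\neq\emptyset$. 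Finally (7)$\Leftrightarrow$(8) is obtained by taking complements.

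The one genuinely new ingredient is condition (9), but it unwinds immediately. Reading Definition \ref{seqO} with $I=B$ and the sequence $(F_H)_{H\in B}$, the statement that $x$ is an $\mathcal F$-$E$-limit point says that for every neighborhood $U$ of $x$ the set $\{H\in B\mid F_H\cap U\neq\emptyset\}$ lies in $E$; and by the displayed definition of $E$ this means exactly that for every $a\in A$ there is $H$ with $a\in H$ and $F_H\cap U\neq\emptyset$, which is verbatim the clause appearing in (5). Hence (5)$\Leftrightarrow$(9) is immediate.

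I do not anticipate a real obstacle, since the argument is a systematic adaptation of an already-proved theorem. The only points requiring genuine care are, first, checking in (3)$\Rightarrow$(4) that the packaging of the $F_H$ into $P_a$ respects the cardinality bound $\kappa_a=|a^<_B|$, and, second, making sure that throughout one consistently uses the $\mathcal F$-version of the closed-set dual (Definition \ref{ABO}(2)) in place of Fact \ref{factclosed}, since the hypotheses now demand that an intersection \emph{contain a member of} $\mathcal F$ rather than merely be nonempty. Keeping these two bookkeeping matters straight is all that distinguishes this proof from that of Theorem \ref{e}.
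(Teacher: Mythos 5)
Your proposal is correct and follows exactly the route the paper takes: the paper's own proof simply states that the argument is ``similar to the proof of Theorem \ref{e}'' (with the final clause handled via Remark \ref{equiv} and the fact that the equivalences hold for an arbitrary family), and your transcription carries out precisely that adaptation, including the correct unwinding of condition (9) against Definition \ref{seqO}. The two bookkeeping points you flag --- the cardinality bound in (3)$\Rightarrow$(4) and the use of Definition \ref{ABO}(2) in place of Fact \ref{factclosed} --- are indeed the only places where care is needed, and you handle them correctly.
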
  

\begin{proof}
The proof is similar to the proof of Theorem \ref{e}.
Cf.~also parts of the proof of \cite[Proposition 6]{tapp2}.

It is not  obvious that we get 
equivalent statements for all conditions, when $\mathcal F$  is replaced by  
 $ \overline{\mathcal F}$, or by 
$\mathcal G$, when
$\mathcal F \equiv \mathcal G$.
However, this is true for, say, Condition (1),
and the proof of the equivalences of (1) - (9)
works for an arbitrary family.
\end{proof}
 
As a simple example of the equivalence of (1) and (9),
and arguing as in Remark \ref{discr},
 a topological  space $X$ has an open cover
of cardinality $\lambda$ with no proper dense subfamily
if and only if 
$X$ contains a discrete family of $\lambda$  open sets.

We now state the results corresponding to those in Section
\ref{convcov}. There is no essential difference in proofs. 

\begin{theorem} \labbel{rO}
Suppose that $X$ is a topological space,
 $\mathcal F$ is a family of subsets of $X$, $I$ is a set, 
 $A \subseteq  \mathcal P(I)$ and  $E=A^+$.

Then the following conditions are equivalent.
  \begin{enumerate}   
 \item   
$X$ satisfies the $\mathcal F$-$E$-accumulation property.
\item
For every sequence  $( C _ a) _{ a \in A   } $
 of closed subsets of $X$, if,
for every $i \in I  $, 
there exists $F\in \mathcal F$ such that   
$ \bigcap _{ i \in a \in A }  C_ a \supseteq F$,
then  
$ \bigcap _{ a \in A}  C_ a \not= \emptyset $.
\item 
$X$ is $\mathcal F$-$ [ B , A]$-\brfrt compact, where $B= \{i^<_A \mid i \in I\} $.
  \item  
For every sequence
$(F_i) _{i \in I} $  of elements in $\mathcal F$, 
if, for each $a \in A$,  we put 
$C_a = \overline{ \bigcup _{i \in a}  F_i }   $, 
then 
$\bigcap _{a \in A}C_a \not= \emptyset  $.  
\end{enumerate} 

In each case, we get equivalent conditions by replacing $\mathcal F$  with either
 $ \overline{\mathcal F}$, or 
$\mathcal G$, in case 
$\mathcal F \equiv \mathcal G$.
 \end{theorem}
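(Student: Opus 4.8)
The plan is to mirror, mutatis mutandis, the proof of Theorem~\ref{r}, replacing points by members of the family $\mathcal F$ and isolated membership $x_i\in U$ by incidence $F_i\cap U\neq\emptyset$. The crucial observation is that every step of the unfamilied argument only ever tested whether the index of a point landing in a neighborhood was compatible with a given $a\in A$; nothing used that the witnessing object was a single point rather than a set meeting $U$. Concretely, I would first establish the $\mathcal F$-version of the local Proposition~\ref{ae}: given a fixed sequence $(F_i)_{i\in I}$ of elements of $\mathcal F$ and a fixed $x\in X$, set $C_a=\overline{\bigcup_{i\in a}F_i}$; then $x$ is an $E$-limit point of $(F_i)_{i\in I}$ (in the sense of Definition~\ref{seqO}) if and only if $x\in\bigcap_{a\in A}C_a$. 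The proof is the verbatim analogue: if $x$ is an $E$-limit point and $U$ is an open neighborhood of $x$, then $e_U=\{i\mid F_i\cap U\neq\emptyset\}\in E=A^+$, so $e_U\cap a\neq\emptyset$ for each fixed $a\in A$, giving some $i\in a$ with $F_i\cap U\neq\emptyset$, whence $U$ meets $\bigcup_{i\in a}F_i$; as $C_a$ is closed this forces $x\in C_a$. Conversely, if $x\in C_a$ for all $a$, then for any neighborhood $U$ the closure meets $U$, so $U$ meets $\bigcup_{i\in a}F_i$, yielding $i\in a$ with $F_i\cap U\neq\emptyset$, i.e.\ $e_U\cap a\neq\emptyset$ for every $a\in A$, so $e_U\in A^+=E$.

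With this local equivalence in hand, the equivalence $(1)\Leftrightarrow(4)$ of Theorem~\ref{rO} is immediate: condition $(4)$ says $\bigcap_{a\in A}C_a\neq\emptyset$ for \emph{every} such sequence, and condition $(1)$ says every such sequence has \emph{some} $E$-limit point, and by the local proposition these are the same quantifier applied to the same set. For $(3)\Leftrightarrow(4)$ I would invoke Theorem~\ref{eO}$(1)\Leftrightarrow(4)$ exactly as in the proof of Theorem~\ref{r}: with $B=\{i^<_A\mid i\in I\}$, a typical $H\in B$ is $H=i^<_A$, and $H\in a^<_B$ iff $a\in i^<_A$ iff $i\in a$, so the expression $\bigcup\{F_H\mid H\in a^<_B\}$ appearing in Theorem~\ref{eO}$(4)$ becomes $\bigcup_{i\in a}F_i$, matching $C_a$ here. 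Thus Theorem~\ref{eO}$(4)$ specializes precisely to Theorem~\ref{rO}$(4)$.

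Finally $(2)\Leftrightarrow(3)$: condition $(3)$, unwound through Definition~\ref{ABO}$(2)$ with $B=\{i^<_A\mid i\in I\}$, is exactly the closed-set formulation where the hypothesis ``for every $H\in B$ there is $F\in\mathcal F$ with $\bigcap_{a\in H}C_a\supseteq F$'' becomes ``for every $i\in I$ there is $F\in\mathcal F$ with $\bigcap_{i\in a\in A}C_a\supseteq F$''—precisely the hypothesis of $(2)$—so the two conditions coincide by definition. The closing remark about replacing $\mathcal F$ by $\overline{\mathcal F}$ or by an $\equiv$-equivalent $\mathcal G$ follows from Remark~\ref{equiv}, keeping in mind the proviso there that for the accumulation formulations one assumes $E$ closed under supersets; since $E=A^+$ is automatically closed under supersets, this proviso is satisfied here. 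The main obstacle, such as it is, lies entirely in the bookkeeping of the first paragraph: one must be careful that ``$F_i\cap U\neq\emptyset$'' is the correct replacement for ``$x_i\in U$'' and that closedness of $C_a$ is still exactly what converts ``$U$ meets $\bigcup_{i\in a}F_i$ for all $U$'' into ``$x\in C_a$''—but no new idea beyond Theorem~\ref{r} is required, which is why the authors can honestly assert there is no essential difference in the proofs.
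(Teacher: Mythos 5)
Your proposal is correct and follows exactly the route the paper intends: the paper gives no separate proof of Theorem~\ref{rO}, stating only that ``there is no essential difference in proofs'' from Theorem~\ref{r}, and it records your key lemma verbatim as Proposition~\ref{aeO}. Your decomposition --- local $E$-limit-point characterization for $(1)\Leftrightarrow(4)$, specialization of Theorem~\ref{eO}$(1)\Leftrightarrow(4)$ for $(3)\Leftrightarrow(4)$, and definitional unwinding for $(2)\Leftrightarrow(3)$ --- mirrors the paper's proof of Theorem~\ref{r} step for step.
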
 

We state explicitly also the analogue of Proposition \ref{ae},
 since it does not follow formally
from Theorem \ref{rO}.   

\begin{proposition} \labbel{aeO}
Suppose that $X$ is a topological space, $x \in X$, $I$ is a set,
and $(F_i) _{i \in I} $ is a sequence of subsets of $X$.
Suppose that $A \subseteq  \mathcal P(I)$, $E=A^+$, 
and, 
for  $a \in A$,  put 
$C_a = \overline{ \bigcup _{i  \in a} F_i }   $.

Then the following conditions are equivalent. 
  \begin{enumerate}   
 \item
x is an $E$-limit point   of 
$(F_i) _{i \in I} $. 
  \item  
$ x \in \bigcap _{a \in A}C_a $.  
\end{enumerate}
 \end{proposition}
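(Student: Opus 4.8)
The plan is to run the proof of Proposition \ref{ae} essentially verbatim, making the single cosmetic substitution of the relation ``$x_i \in U$'' by the relation ``$F_i \cap U \neq \emptyset$'' throughout. The whole argument is powered by the combinatorial duality packaged in the operator $(\cdot)^+$ from Definition \ref{ast}: for a subset $e \subseteq I$, the statement $e \in A^+$ unwinds to precisely ``$e \cap a \neq \emptyset$ for every $a \in A$''. Since $E = A^+$ here, I would keep this reformulation of membership in $E$ in the foreground, as it is exactly what converts the neighborhood-intersection data of an $E$-limit point into the closure condition $x \in C_a$, and back.

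For the direction (1) $\Rightarrow$ (2), I would fix an arbitrary open neighborhood $U$ of $x$ and consider $e_U = \{ i \in I \mid F_i \cap U \neq \emptyset \}$, which lies in $E = A^+$ by the $E$-limit point hypothesis. Fixing any $a \in A$, the defining property of $A^+$ yields $e_U \cap a \neq \emptyset$, so some $i \in a$ satisfies $F_i \cap U \neq \emptyset$; hence $U$ meets $\bigcup_{i \in a} F_i$. Letting $U$ range over all neighborhoods of $x$, this says exactly that $x \in \overline{\bigcup_{i \in a} F_i} = C_a$, and since $a$ was arbitrary, $x \in \bigcap_{a \in A} C_a$. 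For (2) $\Rightarrow$ (1), I would fix a neighborhood $U$ of $x$, set $e = \{ i \in I \mid F_i \cap U \neq \emptyset \}$, and verify $e \in A^+$ by checking $e \cap a \neq \emptyset$ for each $a \in A$: since $x \in C_a = \overline{\bigcup_{i \in a} F_i}$, the neighborhood $U$ meets $\bigcup_{i \in a} F_i$, giving some $i \in a$ with $F_i \cap U \neq \emptyset$, i.e. $i \in e \cap a$. As $U$ was arbitrary, $x$ is an $E$-limit point.

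I do not expect a genuine obstacle; the passage from points to subsets is harmless because both ``$x_i \in U$'' and ``$F_i \cap U \neq \emptyset$'' behave identically under the only two moves used, namely intersecting the index set with a given $a \in A$ and translating ``meets every neighborhood of $x$'' into membership in a closure. The one step deserving a word of care is this last translation, ``$U$ meets $\bigcup_{i \in a} F_i$ for every neighborhood $U$ of $x$ if and only if $x \in \overline{\bigcup_{i \in a} F_i}$,'' which is simply the definition of closure. I would also note that no hypothesis on $E$ beyond $E = A^+$ is needed, since $A^+$ is automatically closed under supersets; this is why the local statement holds without the extra assumption that surfaces in the global Theorem \ref{rO} and in Remark \ref{equiv}.
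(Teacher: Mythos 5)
Your proposal is correct and coincides with the paper's intended argument: the paper gives no separate proof of Proposition \ref{aeO}, presenting it as the direct analogue of Proposition \ref{ae}, and your adaptation (replacing ``$x_i \in U$'' by ``$F_i \cap U \neq \emptyset$'' and using that $e \in A^+$ means $e \cap a \neq \emptyset$ for all $a \in A$) is exactly the proof of \ref{ae} transposed to sequences of subsets. No gaps.
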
 

A version of  Proposition \ref{aeO}
appears in \cite[IV]{Cho}, using different terminology and notations,
and possibly with a misprint.  
Proposition \ref{aeO} appears to be slightly more general,
since $E$ does not necessarily become a filter
(cf. Remark \ref{resemb}).

As an example, Theorem \ref{rO} can be applied to notions related
to ultrafilter convergence, in particular, to $D$-pseudocompactness. 

\begin{definition} \labbel{uf}
Let $D$ be an ultrafilter over some set $I$, $X$ be a topological space,
and $\mathcal F$ be a family of subsets of $X$.

We say \cite[Definition 2.1]{tproc2} that $X$ is  $\mathcal F$-$D$-\emph{compact} if and only if every sequence
$(F_i)_{i \in I}$ 
of members of $\mathcal F$ 
has some $D$-limit point in $X$.

In case $\mathcal F$ is the set of all singletons of $X$, we get back
the notion of $D$-compactness. In case $\mathcal F=\mathcal O$ we get the notion of
$D$-pseudocompactness, as introduced in  \cite{GS,GF}.
 \end{definition}   

\begin{cor} \label{equivodcpnf} 
\cite[Proposition 33]{tapp2}  
Suppose that $X$ is a topological space, $\mathcal F$ 
is a family of subsets of $X$, and 
$D$ is an ultrafilter over some set $I$.
Then the following are equivalent.
\begin{enumerate} 
\item 
$X$ is $\mathcal F$-$D$-\brfrt compact.

\item
For every sequence 
$\{ F_i \mid i \in I \}$
of members of $\mathcal F$, if, for 
$Z \in D$, we put
$C_Z= \overline{ \bigcup _{i \in Z} F_i} $, then
we have that 
$\bigcap _{Z \in D} C_Z \not= \emptyset  $.

\item 
Whenever $(C_Z) _{Z \in D} $ is a  sequence  of closed sets of $X$
with the property that, for every $i \in I$, there exists some $F \in \mathcal F$  
such that $\bigcap _{i\in Z} C_Z \supseteq F$, then  
$\bigcap _{Z \in D} C_Z \not= \emptyset  $.

\item 
For every open cover $(O_Z) _{Z \in D} $ of $X$,
there is some $i \in I$ such that  
$F \cap \bigcup _{i\in Z} O_Z   \not= \emptyset $,
for every $F \in \mathcal F$.
\end{enumerate} 
\end{cor}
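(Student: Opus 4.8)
The plan is to derive this corollary as the special case $A = D$ of Theorem \ref{rO}. The single extra ingredient needed is the defining property of ultrafilters recorded in Remark \ref{d}: since $D$ is an ultrafilter over $I$, one has $D^+ = D$. Indeed, if $a \in D$ then $a \cap e \in D$, hence $a \cap e \neq \emptyset$, for every $e \in D$, so $a \in D^+$; conversely, if $a \notin D$ then $I \setminus a \in D$ witnesses $a \notin D^+$. Setting $A = D$ in Theorem \ref{rO} therefore gives $E = A^+ = D^+ = D$, so that the $\mathcal F$-$E$-accumulation property of Theorem \ref{rO}(1) becomes the $\mathcal F$-$D$-accumulation property.

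I would first observe that this is exactly condition (1) of the corollary: by Definition \ref{uf}, $X$ is $\mathcal F$-$D$-compact if and only if every $I$-indexed sequence $(F_i)_{i \in I}$ of members of $\mathcal F$ has a $D$-limit point, and by Definition \ref{seqO} a $D$-limit point is precisely an $E$-limit point for $E = D$. Hence Corollary (1) coincides with Theorem \ref{rO}(1). It then remains to match the other three conditions under the substitution $A = D$, renaming the running index $a \in A$ as $Z \in D$. Condition (2) of the corollary is Theorem \ref{rO}(4) verbatim, with $C_a = \overline{\bigcup_{i \in a} F_i}$ rewritten as $C_Z = \overline{\bigcup_{i \in Z} F_i}$; condition (3) is Theorem \ref{rO}(2), reading $\bigcap_{i \in Z} C_Z$ as $\bigcap_{i \in Z \in D} C_Z$; and condition (4) is Theorem \ref{rO}(3). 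For this last identification one unwinds $\mathcal F$-$[B, D]$-compactness, with $B = \{i^<_D \mid i \in I\}$, via Definition \ref{ABO}(1): a witness $H \in B$ has the form $H = i^<_D = \{Z \in D \mid i \in Z\}$, so $\bigcup_{a \in H} O_a = \bigcup_{i \in Z \in D} O_Z$, and the requirement that this union meet every member of $\mathcal F$ becomes $F \cap \bigcup_{i \in Z} O_Z \neq \emptyset$ for every $F \in \mathcal F$.

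The only step calling for care is this last translation of the covering condition, where one must correctly unwind the definitions of $i^<_D$ and of $\mathcal F$-$[B, A]$-compactness; but this is routine bookkeeping and presents no real obstacle. As an alternative route, one could instead invoke Proposition \ref{aeO} with $A = D$ and $E = D$ to identify the set of $D$-limit points of $(F_i)_{i \in I}$ with $\bigcap_{Z \in D} C_Z$ directly, thereby obtaining the equivalence of conditions (1) and (2) of the corollary without passing through Theorem \ref{rO}(1).
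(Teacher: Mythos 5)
Your proposal is correct and is exactly the route the paper intends: the corollary is presented as the instance $A=D$ of Theorem \ref{rO}, using $D^+=D$ for ultrafilters (as already noted in Remark \ref{d} for the point case), and your matching of the four conditions under this substitution is accurate. The alternative via Proposition \ref{aeO} that you mention is likewise consistent with the paper's own remarks.
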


In the particular case $\mathcal F =\mathcal O$, 
Corollary  \ref{equivodcpnf} provides a characterization
of $D$-pseudocompactness parallel to the characterization
of $D$-compactness given in  Proposition \ref{DAB}.  
This characterization
of $D$-pseudocompactness
had been explicitly stated with a direct proof in \cite[Proposition 17]{tapp2}.
Also Corollary \ref{wc} can be generalized without difficulty.
We leave this to the reader. 

Of course, 
all the results of Sections \ref{top} and \ref{convcov},
in particular,   
Theorems \ref{e} and \ref{r},
could be obtained as particular cases of
the results in the present section,
by taking $\mathcal F$ to be the set of all 
singletons of $X$.
In principle, we could have first proved Theorem \ref{eO} and \ref{rO}, 
and then obtain Theorems \ref{e} and \ref{r} as corollaries. 
We have chosen the other way for easiness of presentation, and since
already Sections \ref{top} and \ref{convcov} appear 
to be abstract enough.
Probably, there are more readers (if any at all!) interested in 
Theorems  \ref{e} and \ref{r} rather than in Theorems  \ref{eO} and \ref{rO} 
in such a generality. 

However, the particular case $\mathcal F =\mathcal O$  
in the results of the present section appears to be of interest.
We stated the results in the general $\mathcal F$-dependent form for three reasons.
First, to point out that, even if it is possible 
that the results are particularly interesting only in 
the case $\mathcal F =\mathcal O$, nevertheless   almost nowhere we made use of the specific form of the members of $\mathcal O$.
Second, since it is not always trivial that we can
equivalently replace $\mathcal O$  with anyone
 of the families (1) - (4) of Remark \ref{equiv}.
The general form of our statements thus provides many equivalences at the same time.
The third reason for stating the theorems in 
the $\mathcal F$-form is to make clear that 
there is absolutely no difference, in the proofs and in the arguments,
with the case dealt in the preceding sections, that is, when dealing with 
sequences of points, rather than general subsets.
In fact, the statements  of Theorems  \ref{eO} and \ref{rO} unify the two cases.
This is similar to what we have done in 
\cite{tproc2}; indeed, some results of \cite{tproc2}
can be obtained as corollaries of results proved here. 

Of course, the possibility is  left open for interesting applications
of Theorems  \ref{eO} and \ref{rO} in other cases, besides the cases of 
singletons and of nonempty open sets.

\section{Notions related to sequential compactness} \labbel{sequential} 

Sequential compactness is not a particular case of Definition \ref{seq}.
However, Definition \ref{seq} can be modified
in order to include also notions such as sequential compactness. 
The results in Sections \ref{convcov} and \ref{pseudocpn} generalize even to this situation.

 \begin{definition} \labbel{sequent}
 Suppose that $I$ is a set,  $\mathcal E$ is a set of subsets of
$ \mathcal P (I)$, and $X$ is a topological space. 
  \begin{enumerate}    
\item  
If $(x_i) _{i \in I} $ is
a sequence of elements of $X$, we say that $x \in X$ is an
\emph{$\mathcal E$-accumulation point} of $(x_i) _{i \in I} $ if and only if there
is $E \in \mathcal E$   such that 
$x $ is an
$ E$-accumulation point of $(x_i) _{i \in I} $
(in the sense of Definition \ref{seq}). 

We say that $X$ satisfies the \emph{$\mathcal E$-accumulation property}
if and only if every $I$-indexed sequence of elements of $X$
has some $\mathcal E$-accumulation point.
\item
 If $(F_i) _{i \in I} $ 
is an $I$-indexed sequence  of subsets of  $X$, 
we say that a  point
$x \in X$ is 
an \emph{$ \mathcal E$-limit point}  
 of the sequence
$(F_i) _{i \in I} $ if and only if,
for some $E \in \mathcal E$,
$x$ is an $ E$-limit point  
 of 
$(F_i) _{i \in I}$ (cf. Definition \ref{seqO}).

If $\mathcal F$ is a family
of subsets of $X$, we say that $X$ satisfies the \emph{$\mathcal F$-$\mathcal E$-accumulation property}
if and only if every $I$-indexed sequence of elements of $\mathcal F$ 
has some $\mathcal E$-limit point.
\end{enumerate} 
 \end{definition}   

Case (1) in Definition \ref{sequent}
is the particular case of (2) when $\mathcal F$ is taken 
to be the set of all singletons of $X$.

When  $\mathcal E = \{ E \} $
has just one member, Definitions \ref{sequent}(1)(2)
reduce to  Definitions \ref{seq} and \ref{seqO}, respectively.  

\begin{remark} \labbel{cc2}    
Notice that if in Definition \ref{sequent}(1)
we take $I= \omega $ and we let  $\mathcal E $
be the set of all non principal ultrafilters over $ \omega$,
we get still another equivalent formulation of countable compactness.
This is the reformulation of a nowadays standard fact (see, e. g., \cite{GS}).
The equivalence follows also from Remark \ref{bastfiltr} below,
and the fact (Remark \ref{classical}) that countable compactness is equivalent to
$\CAP_ \omega $.  More generally, if $\lambda$ is regular, and 
in Definition \ref{sequent}(1)
we take $I= \lambda  $ and  $\mathcal E $
 the set of all uniform ultrafilters over $ \lambda $,
we get an equivalent formulation of 
$ [ \lambda  , \lambda ]$-compactness, equivalently,
of $\CAP_ \lambda  $.
\end{remark}

We now show how to get the definition of sequential compactness as a particular
case of Definition \ref{sequent}(1).

\begin{definitions} \labbel{sc}   
As usual, if $ W \subseteq \omega$ is infinite, we let
 $[W ] ^ \omega$ denote the set of all infinite subsets of $W$.
If  $ Z \in [ \omega  ] ^ \omega$,
we let 
$F_Z = \{ W \subseteq \omega \mid |Z \setminus W| \text{ is finite}\} $,
that is, $F_Z$ is the filter on $ \omega$ generated by the Frechet filter on $Z$. 

We now get \emph{sequential compactness} if in  
 Definition \ref{sequent}(1) we take
$ I=\omega$, and  $\mathcal E = \{ F_Z \mid Z \in [ \omega  ] ^ \omega \} $.

With the above choice of $I$ and $\mathcal E$, and taking $\mathcal F = \mathcal O$ in \ref{sequent}(2) 
(that is, considering sequences
$(O_i) _{i \in I}$ 
of nonempty open sets of $X$), we get a notion called \emph{sequential pseudocompactness} in 
\cite{AMPRT}, and \emph{sequential feeble compactness} 
in \cite{stseq}. Notice that in \cite{AMPRT}
the $O_i$'s are requested to be pairwise disjoint; 
however, it can be shown  \cite{sps} that we get equivalent definitions,
whether or not we suppose the 
$O_i$'s to be disjoint.
  \end{definitions}

\begin{remark} \labbel{bastfiltr}
Suppose that each element of $\mathcal E$ is closed under supersets, 
and let $\mathcal E' = \{ F \subseteq \mathcal P(I) \mid F \text{ is a filter on $I$ and } F \subseteq E, \text{ for some } E \in  \mathcal E \} $.
Then some
 point $x$ is an
$\mathcal E$-accumulation point of some sequence 
$\mathbf x = (x_i) _{i \in I} $ 
if and only if 
 $x$ is  an $\mathcal E'$-accumulation point of $\mathbf x$.
Indeed,  $\mathcal E'$-accumulation trivially
implies $\mathcal E$-accumulation.
On the other direction, 
if 
 $x$ is  an $\mathcal E$-accumulation point of $\mathbf x$,
then there is $E \in \mathcal E$
such that $I _{\mathbf x,U}=\{ i \in I \mid x_i \in U\} \in E$,
for every open neighborhood $U$ of $x$.
If $F$  is the filter generated by 
$G=\{ I _{\mathbf x,U} \mid U \text{ is an open neighborhood of } x\}$,
then $F \subseteq E$, since 
$G$ is closed under intersection, and 
$E$ is closed under supersets.
Thus $F \in \mathcal E'$, and $F$  witnesses
that $x$ is  an $\mathcal E'$-accumulation point of $\mathbf x$
(cf. also Remarks \ref{minim} and \ref{resemb}).

In particular, under the above assumptions on $\mathcal E$ and $\mathcal E'$,
 a topological space satisfies the $\mathcal E$-accumulation property
if and only if it satisfies the $\mathcal E'$-accumulation property.
Thus, in contrast with Remark \ref{resemb}, and as far as Definition \ref{sequent}
is concerned, it is no loss of generality to assume that all members of
$\mathcal E$ are filters. Of course, the above observation applies
only in case we are not concerned with the cardinality of $\mathcal E$,
since, in the above situation, the cardinality of $\mathcal E'$ is generally
strictly larger than 
the cardinality of $\mathcal E$. 

Notice that the above argument
carries over even when we consider
$\mathcal E'' = \{ F \subseteq \mathcal P(I) \mid F \text{ is a filter on $I$ and, }
 \text{ for some } E \in  \mathcal E, \  F \subseteq E
\text{ and $F$  is maximal among the filters contained in } E \} $
(because every filter $F \subseteq E$ can be extended to a maximal 
filter with this property, using Zorn's Lemma).
Sometimes this turns out to be useful.
 \end{remark}

We now introduce the
generalization of Definitions \ref{BAcpn}
and  \ref{ABO}
which furnishes the equivalent of
Definition \ref{sequent} in terms of properties of open covers.

\begin{definition} \labbel{ABOgen}
Suppose that $A$ is a set, $B, G \subseteq \mathcal P(A)$, and $X$ is a topological space.
  \begin{enumerate}    
\item   
We say that $X$  is
$ [ B , G ]$-\emph{compact}
if and only if one of the following equivalent conditions hold.
  \begin{enumerate}    
\item 
If  $( O _ a) _{ a \in A } $ are open sets of $X$,
and, for every $K \in G$, 
 $( O _ a) _{ a \in K   } $
 is a cover of $X$, 
then there is  $H \in B  $ such that  
 $( O _ a) _{ a \in H } $
 is a cover of $X$, 
\item   
If  $( C _ a) _{ a \in A   } $ is a sequence 
 of closed subsets of $X$, and,
for every $H \in B  $,   
$ \bigcap _{ a \in H}  C_ a \not= \emptyset $,
then  there is $K \in G$ 
such that 
$ \bigcap _{ a \in K}  C_ a \not= \emptyset $.
\end{enumerate} 
\item 
If $\mathcal F$ is a family of subsets of  $X$, we say that $X$  is
$\mathcal F$-$ [ B , G ]$-\emph{compact}
if and only if one of the following equivalent conditions hold.
  \begin{enumerate}    
\item   
If  $( O _ a) _{ a \in A } $ are open sets of $X$,
and, for every $K \in G$, 
 $( O _ a) _{ a \in K   } $
 is  a cover of $X$, 
then there is  $H \in B  $ such that,  
for every  $F\in \mathcal F$, there is $a \in H$ such that   
$  O_ a \cap F\not= \emptyset $.
\item   
If  $( C _ a) _{ a \in A   } $
 are closed sets of $X$, and,
for every $H \in B  $, 
there exists $F\in \mathcal F$ such that   
$ \bigcap _{ a \in H}  C_ a \supseteq F$,
then  there is $K \in G$ 
such that 
$ \bigcap _{ a \in K}  C_ a \not= \emptyset $.
\end{enumerate} 
\end{enumerate} 
\end{definition}

Case (1) in Definition \ref{ABOgen}
is the particular case of (2) when $\mathcal F$ is taken 
to be the set of all singletons of $X$.

 Definitions \ref{BAcpn} and  \ref{ABO} are the particular cases of the above definition
 when $G= \{ A \}$.

\begin{remark} \labbel{schee}    Some known notions are particular cases of 
$ [ B , G ]$-\brfrt compactness,
as introduced in Definition \ref{ABOgen}. 

Indeed, in the particular case when 
$G$ is a partition of $A$, say into $\kappa$ classes, the hypothesis in 
Condition (1)(a) of  Definition \ref{ABOgen}
amounts exactly to considering a family of $\kappa$ open covers
of $X$, each cover having the same cardinality 
as the corresponding class. 
In the rest of this remark we shall deal only with the particular case
when $A$ is countable and $G$ is a partition of 
$A$ into $ \omega$-many classes, each class having cardinality $ \omega$. 

If, under the above assumptions,
 we let $B$ consist of all subsets of $A$ such that $B$ has finite
intersection with
each element of $G$,
then Condition (1)(a) in Definition \ref{ABOgen}
asserts that, given a countable family of countable covers of  $X$, we can extract 
 a cover of $X$ by selecting
a finite number of elements from each one of the original covers.
This property turns out to be equivalent to what nowadays is called the
\emph{Menger property for countable covers}, and is denoted by 
 ${\rm S _{fin}}  (\mathcal O, \mathcal O)$ in \cite[Section 5]{Sch}
(here we are following the notations from \cite{Sch}, and $\mathcal O$ denotes
the collection of all open covers of $X$). [Notice that the terminology
in the published journal version of this preprint is not standard. There we used
``Menger
property''  for ``Menger property for countable covers''.
Similarly for the Rothberger properties.]

On the other hand, if $B$ consists of all subsets of $A$ such that $B$ intersects
each element of $G$  in exactly one element, we get the
\emph{Rothberger property for countable covers}, denoted
by ${\rm S _{1}} (\mathcal O, \mathcal O)$   
in \cite[Section 6]{Sch}.
 
The connections between Definition \ref{ABOgen} and the 
notions introduced in \cite{Sch} probably deserve further analysis.
Notice that here we put no restriction on covers,
while \cite{Sch} also deals with special classes of covers,
such as \emph{large covers}, \emph{$\omega$-covers}  and so on.
One probably gets  interesting notions  modifying 
Definitions \ref{BAcpn},  \ref{ABOgen} etc.,  by putting
restrictions on the nature of the starting cover and of the resulting subcover.
This suggests the next definition.
\end{remark}

\begin{definition} \labbel{ababg}
Suppose that $A$ is a set, $B, G \subseteq \mathcal P(A)$, $X$ is a topological space, and $\mathcal A $, $ \mathcal B$ are collections of covers of $X$.
 
$X$  is
$ [ B _{\mathcal B} , G_{\mathcal A} ]$-\emph{compact}
if and only if whenever  $( O _ a) _{ a \in A } $ are subsets of $X$,
and, for every $K \in G$, 
 $( O _ a) _{ a \in K   } $
 is a cover in $\mathcal A$, 
then there is  $H \in B  $ such that  
 $( O _ a) _{ a \in H } $
 is a cover in $\mathcal B$.
 \end{definition}   

Arguing as in Remark \ref{schee}, the properties
$ {\rm S _{fin}} (\mathcal A, \mathcal B)$
and 
${\rm S _{1}}(\mathcal A, \mathcal B)$
from \cite{Sch} are particular cases of Definition \ref{ababg}. 

The particular case of Definition \ref{ABOgen} in which
$A=\lambda$, $G$ is the set of subsets of $\lambda$ of cardinality $\lambda$,
and $B= \mathcal P _ \kappa ( \lambda )$  
has been briefly hinted on \cite[p. 1380]{tapp}
under the name \emph{almost $ [ \kappa , \lambda  ]$-compactness}. 

In the next theorems 
we give the connections between the notions
introduced in Definitions \ref{sequent} and \ref{ABOgen}.

Recall the definition of $a^<_B$ given just
before Theorem \ref{e}.

\begin{theorem} \labbel{eG} 
Suppose that $A$ is a set, $B, G \subseteq \mathcal P(A)$, and $X$ is a topological space. Then the following conditions are equivalent.
\begin{enumerate} 
\item
$X$ is $ [ B, G]$-\brfrt compact.

\item
For every  sequence  $( P _ a ) _{ a \in A} $
of subsets of $X$, if,
for every $H \in B$, 
$ \bigcap _{ a \in H}  P_ a \not= \emptyset $,
 then  there is $K\in G$ such that  
$ \bigcap _{ a \in K}  \overline{P}_ a \not= \emptyset $.

\item
For every  sequence 
$\{ x_H \mid H \in B \}$
of elements of  $X$,
  there is $K\in G$ such that  
    $ \bigcap _{ a \in K} 
 \overline{ \{ x_H \mid H \in a^<_B \}}
  \not= \emptyset $.
 
\item
For every  sequence 
$\{ Y_H \mid H \in B \}$
of nonempty subsets of  $X$,
  there is $K\in G$ such that  
    $ \bigcap _{ a \in K} 
 \overline{ \bigcup \{ Y_H \mid H \in a^<_B \}}
  \not= \emptyset $.

\item
$X$ satisfies the $\mathcal E$-accumulation property, for
$I=B$ and
$\mathcal E = \{ E_K \mid K \in G \} $ where, for 
$K\in G$, we put  
 $E_K= \{ Z \subseteq B \mid \text{for every } a \in K
\text{ there is }
 H \in Z \text{ such that } a \in H \} $.
 \end{enumerate}
 \end{theorem}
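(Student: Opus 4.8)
The plan is to follow the proof of Theorem \ref{e} almost verbatim, the only novelty being that the single conclusion ``$\bigcap_{a\in A}\dots\neq\emptyset$'' is everywhere replaced by the weaker disjunctive conclusion ``there is $K\in G$ with $\bigcap_{a\in K}\dots\neq\emptyset$''; this existential witness $K$ is simply carried along through each implication. Throughout I would use the closed-set reformulation of $[B,G]$-compactness supplied by Definition \ref{ABOgen}(1)(b). First I would establish the cycle (1)$\Rightarrow$(2)$\Rightarrow$(3)$\Rightarrow$(4)$\Rightarrow$(1), and then treat (3)$\Leftrightarrow$(5) as a pure reformulation.

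For (1)$\Rightarrow$(2), given $(P_a)_{a\in A}$ with $\bigcap_{a\in H}P_a\neq\emptyset$ for every $H\in B$, I would set $C_a=\overline{P}_a$; since $C_a\supseteq P_a$ the hypothesis of Definition \ref{ABOgen}(1)(b) holds, so it yields $K\in G$ with $\bigcap_{a\in K}\overline{P}_a\neq\emptyset$. For (2)$\Rightarrow$(3), given $\{x_H\mid H\in B\}$ I would put $P_a=\{x_H\mid H\in a^<_B\}$; whenever $a\in H\in B$ one has $H\in a^<_B$ and hence $x_H\in P_a$, so $x_H\in\bigcap_{a\in H}P_a$ and (2) applies. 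For (3)$\Rightarrow$(4), given nonempty $Y_H$ I would choose $x_H\in Y_H$, so that $\{x_H\mid H\in a^<_B\}\subseteq\bigcup\{Y_H\mid H\in a^<_B\}$ and the closures are nested, letting the $K$ furnished by (3) serve for (4). Finally for (4)$\Rightarrow$(1), starting from closed $(C_a)$ with $\bigcap_{a\in H}C_a\neq\emptyset$ for all $H\in B$, I would set $Y_H=\bigcap_{a\in H}C_a$; for $a\in A$ every $H\in a^<_B$ satisfies $Y_H\subseteq C_a$, whence $\overline{\bigcup\{Y_H\mid H\in a^<_B\}}\subseteq C_a$ because $C_a$ is closed, and the $K\in G$ produced by (4) gives $\bigcap_{a\in K}C_a\neq\emptyset$, which is exactly the conclusion of Definition \ref{ABOgen}(1)(b).

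It remains to match (5) with (3). Unwinding Definition \ref{sequent}(1) for $I=B$ and $\mathcal E=\{E_K\mid K\in G\}$: a point $x$ is an $E_K$-accumulation point of $(x_H)_{H\in B}$ precisely when, for every neighborhood $U$ of $x$, the set $\{H\in B\mid x_H\in U\}$ lies in $E_K$, i.e.\ for every $a\in K$ there is $H\in B$ with $a\in H$ and $x_H\in U$; and this says exactly that $x\in\bigcap_{a\in K}\overline{\{x_H\mid H\in a^<_B\}}$. Hence the existence of an $\mathcal E$-accumulation point (some $x$ working for some $K\in G$) is literally the assertion that some $K\in G$ makes $\bigcap_{a\in K}\overline{\{x_H\mid H\in a^<_B\}}$ nonempty, which is (3). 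I expect no serious obstacle: the substance is the bookkeeping of the existential quantifier over $K\in G$ through the four substitutions, and the only genuinely new point is the translation between the disjunctive $\mathcal E$-accumulation property of Definition \ref{sequent} and the ``some $K\in G$'' phrasing of (3), which the very definition of $E_K$ renders immediate.
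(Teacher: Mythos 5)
Your proof is correct and is exactly the argument the paper intends: Theorem \ref{eG} is left unproved in the text precisely because it is obtained from the proof of Theorem \ref{e} by carrying the existential witness $K\in G$ through each implication, which is what you do, and your unwinding of Definition \ref{sequent}(1) for the equivalence of (3) and (5) matches the way the paper handles the corresponding condition in Theorem \ref{e}(5) and Corollary \ref{covisacc}. No gaps.
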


\begin{theorem} \labbel{eOG} 
Suppose that $A$ is a set, $B, G \subseteq \mathcal P(A)$,  $X$ is a topological space, and $\mathcal F$ is a family of subsets of $X$. Then the following conditions are equivalent.
\begin{enumerate} 
\item
$X$ is $\mathcal F$-$ [ B, G]$-\brfrt compact.
\item
For every sequence  $( P _ a) _{ a \in A   } $
 of subsets of $X$, if,
for every $H \in B  $, 
there exists $F\in \mathcal F$ such that   
$ \bigcap _{ a \in H}  P_ a \supseteq F$,
then  
  there is $K\in G$ such that  
$ \bigcap _{ a \in K}  \overline{P}_ a \not= \emptyset $.
\item
For every  sequence 
$\{ F_H \mid H \in B \}$
of elements of $\mathcal F$,
  there is $K\in G$ such that   
   $ \bigcap _{ a \in K} 
 \overline{ \bigcup \{ F_H \mid H \in a^<_B \}}
  \not= \emptyset $.
\item
For every  sequence 
$\{ Y_H \mid H \in B \}$
of subsets of  $X$
such that each $Y_H$ contains some 
$F_H \in \mathcal F$, 
  there is $K\in G$ such that  
    $ \bigcap _{ a \in K} 
 \overline{ \bigcup \{ Y_H \mid H \in a^<_B \}}
  \not= \emptyset $.
\item 
 $X$ satisfies the $\mathcal F$-$\mathcal E$-accumulation property, for
$I$ and $\mathcal E$ as in Condition \ref{eO}(5) above.
\end{enumerate}
 \end{theorem}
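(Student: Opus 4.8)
The plan is to prove Theorem \ref{eOG} by closely paralleling the proof of Theorem \ref{eO}, exploiting the fact that $\mathcal F$-$[B,G]$-compactness differs from $\mathcal F$-$[B,A]$-compactness only in that the conclusion ``$\bigcap_{a\in A}\overline P_a\neq\emptyset$'' is relaxed to ``there is $K\in G$ with $\bigcap_{a\in K}\overline P_a\neq\emptyset$''. Since this relaxation is applied uniformly to the conclusion of each of the equivalent conditions, I expect the implications to go through with the same manipulations, now carrying the selection of a witness $K\in G$ through every step.

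First I would establish (1)$\Leftrightarrow$(2) by passing to complements, exactly as in Fact \ref{factclosed} and Definition \ref{ABOgen}(2)(a)$\Leftrightarrow$(b): an open family $(O_a)_{a\in A}$ fails to be refinable to a cover-on-some-$K\in G$ that hits every member of $\mathcal F$ precisely when the closed sets $C_a=X\setminus O_a$ satisfy the premise of (2) while $\bigcap_{a\in K}C_a\neq\emptyset$ fails for every $K\in G$; setting $P_a=C_a$ and noting $\overline{P}_a=C_a$ gives the equivalence. Next, (2)$\Rightarrow$(3) specializes by taking, for each $H\in B$, a point $x_H\in\bigcap_{a\in H}F_H$—here using that the premise supplies some $F_H\in\mathcal F$ with $F_H\subseteq\bigcap_{a\in H}P_a$, choosing $x_H\in F_H$—and setting $P_a=\{x_H\mid H\in a^<_B\}$, whereupon $a\in H$ forces $x_H\in P_a$, so the premise of (2) holds and its conclusion is exactly that of (3). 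The reverse direction (3)$\Rightarrow$(2) is obtained by the reasoning already used for Theorem \ref{eO}(4)$\Rightarrow$(1): given the closed $C_a$, put $D_H=\bigcap_{a\in H}C_a$, observe $C_a\supseteq\overline{\bigcup\{D_H\mid H\in a^<_B\}}$, and apply the chosen points inside the $F_H\subseteq D_H$.

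The implications (3)$\Leftrightarrow$(4) proceed as in Theorem \ref{eO}(4)$\Leftrightarrow$(6): (4)$\Rightarrow$(3) is the trivial specialization $Y_H=F_H$, and (3)$\Rightarrow$(4) follows from the monotonicity inclusion $\overline{\bigcup\{Y_H\mid H\in a^<_B\}}\supseteq\overline{\bigcup\{F_H\mid H\in a^<_B\}}\supseteq\overline{\{x_H\mid H\in a^<_B\}}$, so that the same $K\in G$ witnessing nonemptiness in (3) works in (4). Finally, (3)$\Leftrightarrow$(5) is the translation into the language of Definition \ref{sequent}(2): with $I=B$ and $\mathcal E=\{E_K\mid K\in G\}$, a point $x$ is an $E_K$-limit point of the sequence $(F_H)_{H\in B}$ exactly when, for every open neighborhood $U$ of $x$, the set $\{H\in B\mid F_H\cap U\neq\emptyset\}$ lies in $E_K$, i.e. for every $a\in K$ there is $H$ with $a\in H$ and $F_H\cap U\neq\emptyset$; by the local correspondence of Proposition \ref{aeO} (applied with the index set $B$ and the family $A$ replaced by $a^<_B$, $a\in K$) this is precisely the condition $x\in\bigcap_{a\in K}\overline{\bigcup\{F_H\mid H\in a^<_B\}}$. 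Hence an $\mathcal E$-limit point exists iff some $K\in G$ makes this intersection nonempty, which is (3).

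The main obstacle I anticipate is the bookkeeping in step (3)$\Leftrightarrow$(5): the $E_K$ here are the accumulation-sets attached to each $K\in G$ rather than a single $E$, and one must verify carefully that an $E_K$-limit point of the $B$-indexed sequence $(F_H)$ corresponds, via the identification $H\leftrightarrow i^<_A$ implicit in Definition \ref{eOG}(5) and Theorem \ref{eO}(5), to the displayed closed-set intersection over the \emph{subfamily} indexed by $K$ rather than over all of $A$. Once the neighborhood condition defining an $E_K$-limit point is unwound and matched against the membership test $H\in a^<_B\iff a\in H$, the equivalence becomes a direct reading, but this matching—keeping straight which quantifier ranges over $K$ and which over $B$—is the step demanding the most care. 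Everywhere else the arguments are the routine complement-taking and monotonicity manipulations inherited verbatim from the proofs of Theorems \ref{e} and \ref{eO}.
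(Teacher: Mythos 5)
Your overall strategy---running the same cyclic chain of implications as in Theorems \ref{e} and \ref{eO} while carrying a witness $K \in G$ through each conclusion---is exactly what the paper intends (it gives no separate proof of Theorem \ref{eOG}, deferring to the proofs of Theorems \ref{e} and \ref{eO}), and most of your steps are sound: (1)$\Leftrightarrow$(2) by passing to closures and complements, (3)$\Leftrightarrow$(4) by monotonicity of closure under the inclusion $F_H \subseteq Y_H$, (3)$\Leftrightarrow$(5) by the unwinding you describe via Proposition \ref{aeO}, and (3)$\Rightarrow$(2) via $D_H = \bigcap_{a \in H} C_a$ and $F_H \subseteq D_H$.

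However, your step (2)$\Rightarrow$(3) fails as written. You select a point $x_H \in F_H$ for each $H \in B$, set $P_a = \{ x_H \mid H \in a^<_B \}$, and conclude that ``the premise of (2) holds'' because $a \in H$ forces $x_H \in P_a$. But the premise of (2) is not that $\bigcap_{a \in H} P_a$ is nonempty; it is that $\bigcap_{a \in H} P_a$ \emph{contains some member of} $\mathcal F$. With your choice, $\bigcap_{a \in H} P_a = \{ x_{H'} \mid H \subseteq H' \in B \}$ is merely a set of selected points, and there is no reason it should contain any $F \in \mathcal F$ unless $\mathcal F$ consists of singletons (for instance, when $\mathcal F = \mathcal O$ it would have to contain a nonempty open set). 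You have imported the singleton argument of Theorem \ref{e}(3)$\Rightarrow$(4) where the set-valued argument of Theorem \ref{eO} is required. The correct substitution is $P_a = \bigcup \{ F_H \mid H \in a^<_B \}$: then $a \in H$ gives $F_H \subseteq P_a$, hence $\bigcap_{a \in H} P_a \supseteq F_H \in \mathcal F$, the premise of (2) is genuinely verified, and the conclusion of (2) is literally condition (3). With this repair the chain closes and your proof coincides with the intended one.
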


When $G= \{ A \}$, the conditions in Theorems \ref{eG} and \ref{eOG}  
turn out to coincide with the corresponding conditions in Theorems \ref{e} and \ref{eO}.

\begin{theorem} \labbel{rEE}
Suppose that $X$ is a topological space, $I$ is a set,
$G$ is a set of subsets of
$ \mathcal P (I)$,
and put $\mathcal E= \{ K^+ \mid K \in  G \} $
and $ A= \bigcup \mathcal E $.
Then the following conditions are equivalent.
  \begin{enumerate}   
 \item   
$X$ satisfies the $\mathcal E$-accumulation property.
\item
If $(O_a) _{a \in A} $ are open sets of $X$, and,
 for every $K \in G$,
 $(O_a) _{a \in K} $ is  a cover of $X$,
then there is $i \in I$
such that  
 $(O_a) _{i \in a \in A} $ is a cover of $X$.
\item 
$X$ is $ [ B , G]$-\brfrt compact, where $B= \{i^<_A \mid i \in I\} $.
  \item  
For every sequence
$(x_i) _{i \in I} $  of elements of $X$, 
there is $K \in G$ such that  
if, for each $a \in K$,  we put 
$C_a = \overline{\{ x_i \mid i \in a \}}   $, 
then 
$\bigcap _{a \in K}C_a \not= \emptyset  $.  
\end{enumerate} 
 \end{theorem}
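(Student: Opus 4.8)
The plan is to prove Theorem \ref{rEE} exactly as Theorem \ref{r} was proved, inserting throughout the existential quantifier ``there is $K\in G$'' and replacing the single structure $E=A^+$ of Theorem \ref{r} by the family $\mathcal E=\{K^+\mid K\in G\}$. Concretely, I would establish (2)$\Leftrightarrow$(3), then (1)$\Leftrightarrow$(4), and finally (3)$\Leftrightarrow$(4), so that the cycle closes. Each of the three equivalences delegates its real content to a result already proved above, so no genuinely new topological argument is needed; the work is entirely in the index bookkeeping.

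The equivalence (2)$\Leftrightarrow$(3) is a matter of unwinding Definition \ref{ABOgen} and should be immediate. With $B=\{i^<_A\mid i\in I\}$, a set $H\in B$ is exactly some $i^<_A$, and then $(O_a)_{a\in i^<_A}$ is precisely the subfamily $(O_a)_{i\in a\in A}$; moreover the hypothesis ``for every $K\in G$, $(O_a)_{a\in K}$ is a cover'' is verbatim the hypothesis of Definition \ref{ABOgen}(1)(a) (this uses that every $K\in G$ is contained in $A$, so that the subfamily $(O_a)_{a\in K}$ is meaningful). Thus (2) and (3) are literally the same assertion.

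For the heart of the matter, (1)$\Leftrightarrow$(4), I would fix a sequence $(x_i)_{i\in I}$ and apply Proposition \ref{ae} once for each $K\in G$, taking $K$ itself in the role of ``$A$'' and $K^+$ in the role of ``$E$'' (legitimate since $K\subseteq\mathcal P(I)$). That proposition yields: a point $x$ is a $K^+$-accumulation point of $(x_i)_{i\in I}$ if and only if $x\in\bigcap_{a\in K}C_a$, where $C_a=\overline{\{x_i\mid i\in a\}}$. By Definition \ref{sequent}(1) an $\mathcal E$-accumulation point is precisely a $K^+$-accumulation point for some $K\in G$; hence $(x_i)_{i\in I}$ has an $\mathcal E$-accumulation point exactly when $\bigcap_{a\in K}C_a\neq\emptyset$ for some $K\in G$. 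Quantifying over all sequences gives (1)$\Leftrightarrow$(4).

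Finally, (3)$\Leftrightarrow$(4) is obtained by reducing to Theorem \ref{eG} applied with this same $B$ and $G$, the bridge being the surjection $i\mapsto i^<_A$ of $I$ onto $B$ together with the identity $i^<_A\in a^<_B\Leftrightarrow a\in i^<_A\Leftrightarrow i\in a$ (valid for $a\in A$). For (4)$\Rightarrow$(3) I would start from a $B$-indexed sequence of points $\{x_H\mid H\in B\}$, set $x'_i=x_{i^<_A}$, observe $\{x'_i\mid i\in a\}=\{x_H\mid H\in a^<_B\}$, and apply (4) to $(x'_i)_{i\in I}$ to land in condition \ref{eG}(3); for (3)$\Rightarrow$(4) I would instead invoke the subset form \ref{eG}(4), feeding it the fibres $Y_{i^<_A}=\{x_j\mid j^<_A=i^<_A\}$ of an arbitrary $I$-sequence. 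The main obstacle is exactly this reindexing: since $i\mapsto i^<_A$ need not be injective, an arbitrary $I$-sequence does not descend to a $B$-sequence of points, and one is forced through the subset-valued version of Theorem \ref{eG}. The point that makes it work is that $j^<_A=i^<_A$ forces $j\in a\Leftrightarrow i\in a$ for every $a\in A$, whence $\bigcup\{Y_H\mid H\in a^<_B\}=\{x_i\mid i\in a\}$ and the two intersections coincide.
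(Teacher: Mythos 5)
Your proposal is correct and follows essentially the same route as the paper: the paper's proof is exactly the decomposition (2)$\Leftrightarrow$(3) by unwinding Definition \ref{ABOgen}(1)(a), (1)$\Leftrightarrow$(4) directly from Proposition \ref{ae}, and the remaining link handled ``as in the proof of Theorem \ref{r}'' via the correspondence $i\mapsto i^<_A$ and Theorem \ref{eG}. Your explicit detour through the subset-valued condition \ref{eG}(4) to handle the possible non-injectivity of $i\mapsto i^<_A$ is a careful elaboration of a step the paper leaves implicit, not a different argument.
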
 

\begin{proof}
Similar to
the proof of Theorem \ref{r}.
Notice that
(2) $\Leftrightarrow $  (3) is immediate from the definitions, using
Condition (1)(a) in Definition \ref{ABOgen}, and that
(1) $\Leftrightarrow $  (4) follows directly from Proposition \ref{ae}. 
\end{proof}

\begin{theorem} \labbel{rEEf}
Under the assumptions in Theorem \ref{rEE},
and if $\mathcal F$ is a family of subsets of $X$, 
then the following conditions are equivalent.
  \begin{enumerate}   
 \item   
$X$ satisfies the $\mathcal F$-$\mathcal E$-accumulation property.
\item 
$X$ is $\mathcal F$-$ [ B , G]$-\brfrt compact, where $B= \{i^<_A \mid i \in I\} $.
  \item  
For every sequence
$(F_i) _{i \in I} $  of elements of $\mathcal F$, 
there is $K \in G$ such that  
if, for each $a \in K$,  we put 
$C_a = \overline{ \bigcup_{i \in a} F_i }   $, 
then 
$\bigcap _{a \in K}C_a \not= \emptyset  $.  
\end{enumerate} 
 \end{theorem}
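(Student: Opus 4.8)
The plan is to derive Theorem \ref{rEEf} as the $\mathcal{F}$-relativized analogue of Theorem \ref{rEE}, following exactly the same three-step structure but replacing sequences of points with sequences of members of $\mathcal{F}$ and replacing closures of point-sets $\overline{\{x_i \mid i \in a\}}$ with closures of unions $\overline{\bigcup_{i \in a} F_i}$. Since the statement explicitly inherits the setup of Theorem \ref{rEE} (namely $\mathcal{E} = \{K^+ \mid K \in G\}$ and $A = \bigcup \mathcal{E}$), I would open the proof by noting this and then establish the equivalences (1) $\Leftrightarrow$ (3) $\Leftrightarrow$ (2).

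First I would prove (2) $\Leftrightarrow$ (3). This should be immediate from the definitions, just as in Theorem \ref{rEE}: with $B = \{i^<_A \mid i \in I\}$, the members $H \in B$ are exactly the sets $i^<_A = \{a \in A \mid i \in a\}$, so $\mathcal{F}$-$[B,G]$-compactness in the form of Condition (2)(a) of Definition \ref{ABOgen} unwinds directly to the covering statement in (2). The only wrinkle relative to the point-case is that the conclusion ``$(O_a)_{i \in a \in A}$ is a cover of $X$'' must instead read as ``$\bigcup_{i \in a \in A} O_a$ intersects every $F \in \mathcal{F}$''; but this is precisely the $\mathcal{F}$-relativized conclusion built into Definition \ref{ABOgen}(2)(a), so the translation is purely notational.

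Next I would establish (1) $\Leftrightarrow$ (3) by reducing to Theorem \ref{eOG}, in the same way that the point-version (3) $\Leftrightarrow$ (4) of Theorem \ref{r} was handled as a special case of Theorem \ref{e}. Concretely, taking $B = \{i^<_A \mid i \in I\}$, a member $H = i^<_A$ satisfies $H \in a^<_B$ iff $a \in i^<_A$ iff $i \in a$; under this correspondence between $I$ and $B$, the $\mathcal{F}$-$\mathcal{E}$-accumulation property of Definition \ref{sequent}(2) matches Condition \ref{eOG}(5), and $\mathcal{F}$-$[B,G]$-compactness matches \ref{eOG}(1). The key point to verify is that the $\mathcal{E}$ arising here, $\{K^+ \mid K \in G\}$, corresponds correctly to the $\mathcal{E} = \{E_K \mid K \in G\}$ of Theorem \ref{eOG}(5): for each $K \in G$, the set $E_K = \{Z \subseteq B \mid \forall a \in K\ \exists H \in Z\ (a \in H)\}$ should, via the bijection $i \leftrightarrow i^<_A$ and $e \mapsto e' = \{i^<_A \mid i \in e\}$, transform into $\{e' \mid e \cap a \neq \emptyset \text{ for all } a \in K\} = \{e' \mid e \in K^+\}$, exactly as in the alternative argument given in the proof of Theorem \ref{r}. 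Alternatively, and perhaps more cleanly, I would invoke Proposition \ref{aeO}: for each fixed $K \in G$, a point $x$ is a $K^+$-limit point of $(F_i)_{i \in I}$ iff $x \in \bigcap_{a \in K} \overline{\bigcup_{i \in a} F_i}$, so the existence of some $\mathcal{E}$-limit point (i.e. a $K^+$-limit point for some $K \in G$) is exactly the assertion in (3) that some $K \in G$ yields a nonempty intersection $\bigcap_{a \in K} C_a$. This gives (1) $\Leftrightarrow$ (3) directly.

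The main obstacle, such as it is, lies not in any single deep step but in carefully verifying that the passage from the point-indexed setting to the $\mathcal{F}$-indexed setting introduces no hidden subtlety when the quantifier over $K \in G$ is present. In the $G = \{A\}$ case this collapses to Theorem \ref{rO}, which is already proven; the extra generality here is that the conclusion is an existential over $K \in G$ rather than a single fixed intersection. I expect the cleanest route is to reduce everything to the already-established Proposition \ref{aeO} applied $K$-by-$K$, thereby localizing the argument and avoiding any recomputation of the correspondence between $\mathcal{E}$ and $\mathcal{E}_K$. Accordingly my proof would simply read: the equivalence (2) $\Leftrightarrow$ (3) is immediate from Definition \ref{ABOgen}(2)(a), and (1) $\Leftrightarrow$ (3) follows directly from Proposition \ref{aeO}, exactly as in the proof of Theorem \ref{rEE}.
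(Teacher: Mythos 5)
Your proposal is correct and follows essentially the route the paper intends: the paper gives no written proof of Theorem \ref{rEEf}, leaving it as the $\mathcal F$-relativization of Theorem \ref{rEE}, whose proof cites Proposition \ref{ae} for the accumulation--intersection equivalence and the definitions for the compactness side; you supply exactly the analogous argument, using Proposition \ref{aeO} applied $K$-by-$K$ for (1) $\Leftrightarrow$ (3) and Definition \ref{ABOgen}(2) (equivalently, the specialization of Theorem \ref{eOG} to $B=\{i^<_A \mid i \in I\}$) for (2) $\Leftrightarrow$ (3). One tiny imprecision: in Theorem \ref{rEEf} there is no separate ``covering statement'' condition (unlike in Theorem \ref{rEE}), so your (2) $\Leftrightarrow$ (3) is not literally an unwinding of Definition \ref{ABOgen}(2)(a) but rather the short closed-sets argument via \ref{ABOgen}(2)(b), choosing for each $i$ a witness $F_i\in\mathcal F$ with $F_i\subseteq\bigcap_{i\in a}C_a$ and noting $\overline{\bigcup_{i\in a}F_i}\subseteq C_a$ --- a step your second paragraph in effect already contains.
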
 

Theorem \ref{rEE} is the particular case of Theorem \ref{rEEf}
when $\mathcal F$ is the family of all singletons of $X$. 
Theorems \ref{r} and \ref{rO}  
are the particular cases of, respectively,  Theorems \ref{rEE} and \ref{rEEf} 
when $G= \{ A\} $  
has just one member.

The following characterization of sequential compactness 
in terms of open covers might be known, but
we know no reference for it.

\begin{corollary} \labbel{seqopcov}
A topological space $X$ is sequentially
compact (sequentially feebly compact, respectively)
if and only if, for every  open cover 
$\{ O_a \mid a \in [ \omega ] ^ \omega \}$ of $X$ 
such that 
$\{ O_a \mid a \in [ Z ] ^ \omega \}$ 
is still a cover of $X$, for every $Z \in [ \omega ] ^ \omega$,
there is $n \in  \omega$  
such that $\{ O_a \mid n \in a \in [ \omega ] ^ \omega \}$
is a cover of $X$  (is dense in $X$, respectively).
 \end{corollary}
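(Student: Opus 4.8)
The plan is to derive both equivalences as instances of Theorems \ref{rEE} and \ref{rEEf}, applied with $I=\omega$ and the single choice $G=\{[Z]^\omega \mid Z \in [\omega]^\omega\}$. Recall from Definitions \ref{sc} that sequential compactness is the $\mathcal{E}$-accumulation property (Definition \ref{sequent}(1)) for $I=\omega$ and $\mathcal{E}=\{F_Z \mid Z \in [\omega]^\omega\}$, while sequential feeble compactness is the corresponding $\mathcal{O}$-$\mathcal{E}$-accumulation property; so it suffices to check that the $G$ above produces exactly this $\mathcal{E}$, together with the right $A$ and $B$, and then to read off Condition (2) of each theorem.

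The key computation is that $([Z]^\omega)^+ = F_Z$ for every $Z \in [\omega]^\omega$. Indeed, a set $W \subseteq \omega$ meets every infinite subset of $Z$ if and only if $Z \setminus W$ is finite: if $Z\setminus W$ is infinite it is itself an infinite subset of $Z$ disjoint from $W$, and conversely if $Z \setminus W$ is finite then every infinite $a \subseteq Z$ has $a \setminus W \subseteq Z \setminus W$ finite, hence $a \cap W \neq \emptyset$. Thus $\mathcal{E}=\{K^+ \mid K \in G\}=\{F_Z \mid Z \in [\omega]^\omega\}$, as required. Next I would verify that $A=\bigcup\mathcal{E}=[\omega]^\omega$: every infinite $W$ lies in $F_W$, while no finite $W$ can belong to any $F_Z$ (that would force $Z$ finite). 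Finally, with $A=[\omega]^\omega$ and $I=\omega$, the set $B=\{n^<_A \mid n \in \omega\}$ consists of the families $n^<_A=\{a \in [\omega]^\omega \mid n \in a\}$, and, taking $Z=\omega$, we have $[\omega]^\omega \in G$, so the hypothesis ``$(O_a)_{a\in K}$ is a cover for every $K \in G$'' is precisely ``$(O_a)_{a\in[\omega]^\omega}$ is an open cover whose restriction to $[Z]^\omega$ remains a cover for every $Z$''.

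With these identifications, Condition (2) of Theorem \ref{rEE} states verbatim that, for every such cover, there is $n \in \omega$ with $(O_a)_{n \in a \in [\omega]^\omega}$ a cover; combined with (1), this gives the sequential compactness case. For the feeble-compactness case I would instead invoke Theorem \ref{rEEf} with $\mathcal{F}=\mathcal{O}$, and unfold its Condition (2), i.e. $\mathcal{O}$-$[B,G]$-compactness, through Definition \ref{ABOgen}(2)(a): the conclusion ``there is $H=n^<_A \in B$ such that for every nonempty open $F$ there is $a \in H$ with $O_a \cap F \neq \emptyset$'' says exactly that $\bigcup\{O_a \mid n \in a\}$ meets every nonempty open set, that is, is dense in $X$.

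The arguments are routine once the translation is set up; the only genuine step requiring care is the computation $([Z]^\omega)^+=F_Z$ and the determination $\bigcup_Z F_Z=[\omega]^\omega$, on which the exact match between the theorems' hypotheses and the cover hypotheses of the corollary rests. I do not expect any real obstacle beyond bookkeeping, since $F_Z$ is a filter and hence closed under supersets, which is what makes $K^+=F_Z$ solvable for $K$ in the first place (one could alternatively take $K=F_Z^+$ and use $F_Z^{++}=F_Z$, but $[Z]^\omega$ is the more transparent choice that directly yields the cover formulation of the corollary).
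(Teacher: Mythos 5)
Your proposal is correct and follows exactly the paper's own route: instantiating Theorems \ref{rEE} and \ref{rEEf} with $I=\omega$ and $G=\{[Z]^\omega \mid Z\in[\omega]^\omega\}$, computing $([Z]^\omega)^+=F_Z$, and reading off the equivalence $(1)\Leftrightarrow(2)$. You merely spell out more of the bookkeeping (the verification that $([Z]^\omega)^+=F_Z$, that $A=[\omega]^\omega$, and the unfolding of $\mathcal O$-$[B,G]$-compactness via Definition \ref{ABOgen}(2)(a)) than the paper does.
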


 \begin{proof}
Take $I= \omega $
and $G = \{[Z] ^ \omega  \mid Z \in  [ \omega ] ^ \omega  \}$ 
in Theorems \ref{rEE} and \ref{rEEf}.
If $K=[Z] ^ \omega\in G$, then 
$K^+= F_Z$, in the notations of  
Definition \ref{sc}. 
Thus the corollary is a particular case of
the equivalence (1) $\Leftrightarrow $  (2) in
Theorems \ref{rEE} and \ref{rEEf}, respectively,

Of course, also a direct proof of Corollary \ref{seqopcov} is not difficult. 
 \end{proof}

As a special case of Theorem \ref{eG} (1) $\Leftrightarrow $  (3), we get 
the following characterizations (probably folklore) 
of the Rothberger and the Menger properties for countable covers.

\begin{corollary} \labbel{roth}
A topological space $X$ satisfies the 
Rothberger property for countable covers if and only if, 
for every sequence 
$ \{ x _f \mid f: \omega \to \omega \}  $
of elements of $X$,  
there is $ n \in \omega$ 
such that 
$\bigcap _{m \in \omega }
   \overline{ \{ x_f \mid f(n)= m\} }  \not= \emptyset  $.  

A topological space $X$ satisfies the 
Menger property for countable covers if and only if, 
for every sequence 
$ \{ x _f \mid f: \omega \to [\omega] ^{< \omega }  \}  $
of elements of $X$,  
there is $ n \in \omega$ 
such that 
$\bigcap _{m \in \omega }
   \overline{ \{ x_f \mid m \in f(n)\} }  \not= \emptyset  $.  
 \end{corollary}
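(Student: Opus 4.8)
The plan is to derive Corollary~\ref{roth} as an immediate specialization of the equivalence \ref{eG}(1)~$\Leftrightarrow$~\ref{eG}(3), so the whole task reduces to choosing the right data $A$, $B$, $G$ and verifying that the abstract condition \ref{eG}(3) rewrites, after an innocuous change of indexing, into the stated formula. Recall from Remark~\ref{schee} the description of the Rothberger and Menger properties as $[B,G]$-\brfrt compactness. First I would record that for the Rothberger case the natural choice is $A=\omega\times\omega$, with $G$ the partition $\{G_n\mid n\in\omega\}$, where $G_n=\{n\}\times\omega$ is the $n$-th column, and with $B$ the set of all \emph{selectors}, i.e.\ those $H\subseteq A$ meeting each $G_n$ in exactly one point; equivalently $H$ is the graph of a function $f\colon\omega\to\omega$, so that $B$ is in bijection with ${}^\omega\omega$. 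For the Menger case one takes instead $B$ equal to the set of those $H$ meeting each column in a finite set, which one encodes by a function $f\colon\omega\to[\omega]^{<\omega}$.

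\textbf{Rewriting condition \ref{eG}(3).}
Next I would unwind \ref{eG}(3) under these choices. A sequence $\{x_H\mid H\in B\}$ indexed by $B$ is, via the above bijection, a sequence $\{x_f\mid f\colon\omega\to\omega\}$ in the Rothberger case. The crux is to compute the auxiliary set $a^<_B=\{H\in B\mid a\in H\}$ for a point $a=(n,m)\in A$: in the selector picture $H$ is the graph of some $f$, and $(n,m)\in H$ means precisely $f(n)=m$. Hence
\[
\overline{\{x_H\mid H\in a^<_B\}}=\overline{\{x_f\mid f(n)=m\}}.
\]
Condition \ref{eG}(3) asserts the existence of some $K\in G$ with $\bigcap_{a\in K}\overline{\{x_H\mid H\in a^<_B\}}\neq\emptyset$. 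Since every $K\in G$ is a single column $G_n=\{(n,m)\mid m\in\omega\}$, choosing $K$ amounts to choosing $n\in\omega$, and the intersection over $a\in K$ becomes the intersection over $m\in\omega$. This produces exactly
\[
\bigcap_{m\in\omega}\overline{\{x_f\mid f(n)=m\}}\neq\emptyset
\]
for some $n$, which is the displayed Rothberger formula. The Menger case runs identically, except that $(n,m)\in H$ now reads $m\in f(n)$, yielding $\overline{\{x_f\mid m\in f(n)\}}$, and $B$ ranges over $f\colon\omega\to[\omega]^{<\omega}$.

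\textbf{The main obstacle.}
The only genuine content beyond bookkeeping is confirming that the $[B,G]$-\brfrt compactness attached to these particular $A$, $B$, $G$ really is the Rothberger (respectively Menger) property in the classical sense of Remark~\ref{schee}; this requires matching the hypothesis of Definition~\ref{ABOgen}(1)(a)---that $(O_a)_{a\in K}$ covers $X$ for every $K\in G$---with the statement that each of the $\omega$-many given covers covers $X$, and matching the conclusion (a selector, resp.\ a finite-per-column set, giving a cover) with the Rothberger selection ${\rm S}_1(\mathcal O,\mathcal O)$ (resp.\ the Menger selection ${\rm S}_{\rm fin}(\mathcal O,\mathcal O)$). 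I expect the subtlest point to be the harmless reindexing from an \emph{unordered} collection of countable covers to a collection indexed by $G$, and checking that allowing repetitions in the sequence $\{x_f\}$ causes no difference, exactly as in the sequence-versus-subset discussion of Remark~\ref{seqvssubs}; once that is granted, the corollary follows verbatim from Theorem~\ref{eG}.
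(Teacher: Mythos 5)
Your proposal is correct and follows exactly the route the paper intends: the paper offers no proof beyond citing Theorem \ref{eG} (1)~$\Leftrightarrow$~(3) together with the identification of the Rothberger and Menger properties as $[B,G]$-\brfrt compactness in Remark \ref{schee}, and your choice of $A=\omega\times\omega$, $G$ the columns, and $B$ the selectors (resp.\ the finite-per-column sets) is precisely that identification, with the computation of $a^<_B$ carried out correctly. Nothing further is needed.
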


If $I$ is a set, and $M$ is a set of ultrafilters over  $I$, then
a topological space $X$ is said to be \emph{quasi $M$-compact} if and only if,
for every $I$-indexed sequence $(x_i) _{i \in I}$ 
of elements of $X$, there exists $D \in M$ such that 
$(x_i) _{i \in I}$ $D$-converges to some point of $X$. 
Of course, if $M=\{ D \}$ is a singleton, then 
quasi $M$-compactness is the same as $D$-compactness,
and is also equivalent to weak  $M$-compactness (Definition \ref{weakly}).
See \cite{GFwc} for further references about these notions.

 The space 
$X$ is \emph{quasi $M$-pseudocompact} if and only if,
for every $I$-indexed sequence $(O_i) _{i \in I}$ 
of nonempty open sets of $X$, there exists $D \in M$ such that 
$(O_i) _{i \in I}$ has some $D$-limit point in $X$.
Notice that, for $I= \omega $, the above notion is called  
\emph{$M$-pseudocompactness} in \cite[Definition 2.1]{GFsomegen}.
We have chosen the name  
quasi $M$-pseudocompactness in analogy with
quasi $M$-compactness.

\begin{corollary} \labbel{quasicp}
Suppose that  $M$ is a set of ultrafilters over some set $I$, 
and let $A = \bigcup_{D\in M} D$. 
Then
a topological space $X$ is quasi $M$-compact (quasi $M$-pseudocompact, respectively)
if and only if,
whenever $(O_a) _{a \in A} $ are open sets
of $X$, 
and, for every $D \in M$,
 $(O_a) _{a \in D} $ is a cover of $X$,
then there is $i \in I$
such that  
 $(O_a) _{i \in a \in A} $ is a cover of $X$
(is dense in $X$, respectively).
 \end{corollary}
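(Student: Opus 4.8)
The plan is to read off both assertions as special cases of Theorems \ref{rEE} and \ref{rEEf}, specialized to the set of ultrafilters itself. The substantive work is only to choose the parameters correctly and to check that the definitions of quasi $M$-compactness and quasi $M$-pseudocompactness coincide, on the nose, with the ($\mathcal F$-)$\mathcal E$-accumulation properties of Definition \ref{sequent}. First I would set $G = M$ in Theorem \ref{rEE}. Since each $D \in M$ is an ultrafilter over $I$, we have $D^+ = D$ (this is the ultrafilter case of the identity recorded just before Theorem \ref{r}; see also Remark \ref{d}). Hence $\mathcal E = \{ K^+ \mid K \in G \} = M$, and $A = \bigcup \mathcal E = \bigcup_{D \in M} D$ is exactly the index set $A$ appearing in the corollary.

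Next I would identify quasi $M$-compactness with the $\mathcal E$-accumulation property for this $\mathcal E = M$. Recall that, for an ultrafilter $D$, a point $x$ is a $D$-accumulation point of $(x_i)_{i \in I}$ in the sense of Definition \ref{seq} precisely when $(x_i)_{i \in I}$ $D$-converges to $x$, both conditions reading $\{ i \in I \mid x_i \in U \} \in D$ for every neighborhood $U$ of $x$. Thus, by Definition \ref{sequent}(1), a sequence has an $\mathcal E$-accumulation point if and only if it $D$-converges to some point for some $D \in M$; as both quantifiers are existential, this is verbatim the defining clause of quasi $M$-compactness. The equivalence (1) $\Leftrightarrow$ (2) of Theorem \ref{rEE} then delivers the stated covering characterization: an open family $(O_a)_{a \in A}$ that covers $X$ along each $D \in M$ admits some $i \in I$ with $(O_a)_{i \in a \in A}$ a cover.

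For the parenthetical pseudocompact version I would instead invoke Theorem \ref{rEEf} with the same $G = M$ and with $\mathcal F = \mathcal O$, the family of nonempty open sets, so that $B = \{ i^<_A \mid i \in I \}$. Quasi $M$-pseudocompactness requires that every sequence $(O_i)_{i \in I}$ of nonempty open sets have a $D$-limit point for some $D \in M$, which is exactly the $\mathcal O$-$\mathcal E$-accumulation property of Definition \ref{sequent}(2) for $\mathcal E = M$. By (1) $\Leftrightarrow$ (2) of Theorem \ref{rEEf} this is equivalent to $\mathcal O$-$[B, G]$-compactness, and unfolding Definition \ref{ABOgen}(2)(a) with $\mathcal F = \mathcal O$ converts the conclusion ``$\bigcup_{i \in a \in A} O_a$ meets every nonempty open set'' into ``$\bigcup_{i \in a \in A} O_a$ is dense in $X$'', as already noted after Definition \ref{ABO}. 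This yields the pseudocompact statement verbatim. The proof presents no real obstacle; the one point demanding care is that the existential quantifier over $D \in M$ implicit in the word ``quasi'' matches the existential quantifier over $E \in \mathcal E$ in Definition \ref{sequent}, and that $\mathcal E = M$ is legitimately of the required form $\{ K^+ \mid K \in G \}$ --- which is exactly where the identity $D^+ = D$ enters.
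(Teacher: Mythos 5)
Your proposal is correct and follows exactly the paper's own route: the paper proves this corollary by citing Theorems \ref{rEE} and \ref{rEEf}, equivalence (1) $\Leftrightarrow$ (2), with $\mathcal E = M$ (i.e.\ $G=M$), using the fact that $D^+=D$ for an ultrafilter $D$. Your write-up merely makes explicit the bookkeeping (matching the existential quantifiers, identifying $A=\bigcup\mathcal E$, and unfolding Definition \ref{ABOgen}(2)(a) for the dense-union conclusion) that the paper leaves implicit.
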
 

\begin{proof} 
By 
Theorems \ref{rEE} and \ref{rEEf}  (1) $\Leftrightarrow $  (2),
with $\mathcal E = M$, since, as already noticed, if $D$ is an ultrafilter,
then $D^+=D$. 
\end{proof}

\begin{remark} \labbel{hon}
As a final remark, let us mention that not every ``covering property'' present in the literature has
the form given in Definitions \ref{BAcpn}, \ref{ABO}, or \ref{ABOgen},
the most notable case being paracompactness. 
More generally, all covering properties
involving some particular properties (local finiteness, point finiteness, etc.) of the 
original cover, or of the resulting subcover,
 are not part of  the framework given by Definition \ref{BAcpn},
as it stands.

There are even   equivalent formulations of countable compactness
 which, at least formally, are not  particular cases  of
Definition \ref{BAcpn}.
Indeed, a space $X$ is countably compact
if and only if, for  every countable open cover 
$(O_n) _{n \in \omega } $
such that $O_n \subseteq O_m$, for $n \leq m < \omega $,
there is $ n \in \omega$  such that 
$O_n= X$.
The above condition cannot be directly expressed as a particular case of 
Definition \ref{BAcpn}.
 
In spite of the above remarks, we believe to have demonstrated
that Definition \ref{BAcpn} and its variants are  general enough to capture many 
disparate and seemingly unrelated
notions, being  at the same time sufficiently
concrete and manageable so that interesting results
can be proved about them.

Of course, as we did in Definition \ref{ababg}, there is the possibility of modifying Definition \ref{BAcpn} 
and its variants by considering particular covers with special properties (cf. also Remark \ref{schee}).
We have not yet pursued this promising line of research.
 \end{remark}

\end{document}